\newtheoremstyle{custom}
  {3pt}
  {3pt}
  {\slshape}
  {}
  {\bfseries}
  {.}
  { }
   {}
\theoremstyle{custom}
\newtheorem{theorem}{Theorem}[section]
\newtheorem{proposition}[theorem]{Proposition}
\newtheorem{proposition/definition}[theorem]{Proposition/Definition}
\newtheorem{lemma}[theorem]{Lemma}
\newtheorem{corollary}[theorem]{Corollary}
\newtheorem{conjecture}[theorem]{Conjecture}
\theoremstyle{definition}
\newtheorem{definition}[theorem]{Definition}
\newtheorem{example}[theorem]{Example}
\newtheorem{question}[theorem]{Question}
\theoremstyle{remark}
\newtheorem{remark}[theorem]{Remark}
\newcommand{\stack}[2]{\ensuremath{\genfrac{}{}{0pt}{}{#1}{#2}}} 
\newtheoremstyle{exercise}
  {3pt}
  {6pt}
  {}
  {}
  {\bfseries}
  {:}
  { }
   {}
\theoremstyle{exercise}
\newtheorem{exercise}[theorem]{Exercise}
\newtheoremstyle{exercises}
  {3pt}
  {6pt}
  {}
  {}
  {\bfseries}
  {:}
  {\newline}
   {}
\theoremstyle{exercise}
\newtheorem{exercises}[theorem]{Exercises}
\def\boxit#1{\vbox{\hrule height1pt\hbox{\vrule width1pt\kern3pt
  \vbox{\kern3pt#1\kern3pt}\kern3pt\vrule width1pt}\hrule height1pt}}
\def\BC{\mathbb C}\def\BS{\mathbb S}
\def\BR{\mathbb R}\def\BH{\mathbb H}
\def\BP{\mathbb P}\def\BG{\mathbb G}
\def\hd{,...,}
\def\ww{\wedge}
\def\inv{{}^{-1}}
\def\CC{\mathbb C}
\def\ZZ{\mathbb Z}
\def\11{\mathbf 1}
\def\QQ{\mathbb Q}
\def\l{\lambda}
\def\a{\alpha}
\def\o{\omega}
\def\g{\gamma}
\def\s{\sigma}
\def\d{\delta}
\def\ot{{\mathord{ \otimes } }}
\def\op{{\mathord{\,\oplus }\,}}
\def\lra{{\mathord{\;\longrightarrow\;}}}
\def\ra{{\mathord{\;\rightarrow\;}}}
\def\dim{{\rm dim}\;}
\def\La#1{\Lambda^{#1}}
\def\op{\oplus}
\def\BH{\Bbb H}\def\BZ{\Bbb Z}
\def\ep{\epsilon}
\def\op{\oplus}
\def\s{\sigma}
\def\t{\tau}
\def\a{\alpha}
\def\g{\gamma}
\def\l{\lambda}
\def\FS{\mathfrak  S}
\def\ol{\overline}
\def\BP{\mathbb  P}
\def\BC{\mathbb  C}
\def\tcodim{\text{codim}}
\def\BR{\mathbb  R}\def\BS{\mathbb  S}
\def\ep{\epsilon}
\def\hd{, \hdots ,}
\def\inv{{}^{-1}}
\def\La#1{\Lambda^{#1}}
\def\ra{\rightarrow}
\def\tdet{\operatorname{det}}
\def\tperm{\operatorname{perm}}
\def\tmod{\operatorname{mod}}
\def\tmin{\operatorname{min}}
\def\trank{\operatorname{rank}}
\def\ww{\wedge}
\def\be{\begin{equation}}
\def\ene{\end{equation}}
\def\tsgn{{\rm{sgn}}}
\def\p{{\bold P}}
\def\np{{\bold N\bold P}}
\newcommand{\End}{\operatorname{End}}
\newcommand{\Id}{\operatorname{Id}}
\newcommand{\Hom}{\operatorname{Hom}}
\newcommand\Altgr{{\mathfrak{A}}}
\newcommand\Symgr{{\mathfrak{S}}}
\newcommand\longto\longrightarrow
\newcommand\Mcc{{\mathcal M}}
\newcommand\Bc{{\mathcal B}}
\newcommand\Dc{{\mathcal D}}
\newcommand\tDet{{\mathcal Det}}
\newcommand{\GL}{\operatorname{GL}}
\newcommand{\transp}{\operatorname{transp}}
\newcommand{\dc}{\operatorname{dc}}\newcommand{\sdc}{\operatorname{edc}}
\newcommand{\rdc}{\operatorname{rdc}}\newcommand{\srdc}{\operatorname{erdc}}
\newcommand{\perm}{\operatorname{perm}}
\newcommand{\Wt}{\operatorname{Wt}}
\newcommand{\SL}{\operatorname{SL}}
\def\trank{{\mathrm {rank}}}
\DeclareMathOperator{\rk}{rk}
\def\cM{\mathcal M}
\begin{document}

\title[permanent v. determinant with symmetry]{Permanent v. determinant: an exponential lower bound assuming
symmetry and a potential path towards Valiant's conjecture} 
\author{J.M.~Landsberg and Nicolas Ressayre}
 \begin{abstract} We initiate a study of determinantal representations with symmetry. We show that Grenet's determinantal representation  for the permanent is optimal among determinantal
 representations respecting  left multiplication by permutation
 and diagonal matrices (roughly half the symmetry group of the
 permanent).
 In particular,   if   any optimal determinantal
 representation of the permanent must be  polynomially related to one with such symmetry, then Valiant's conjecture
 on   permanent v. determinant is true. 
\end{abstract}
\thanks{Landsberg  supported by NSF grant DMS-1405348.
Ressayre   supported by ANR Project
(ANR-13-BS02-0001-01) and by Institut Universitaire de France}
\email{jml@math.tamu.edu, ressayre@math.univ-lyon1.fr}
\keywords{Geometric Complexity Theory, determinant, permanent,   MSC 68Q15 (20G05)}
\maketitle

\section{Introduction}

Perhaps the most studied polynomial of all is the determinant:
\be 
\tdet_n(x):=\sum_{\s\in \FS_n} \tsgn(\s) x^1_{\s(1)}x^2_{\s(2)}
\cdots\, x^n_{\s(n)},
\ene 
a homogeneous polynomial of degree $n$ in $n^2$ variables.
Here $\FS_n$ denotes the group of permutations on $n$ elements and $\tsgn(\s)$ denotes the sign of the permutation $\s$.

Despite its formula with $n!$ terms, $\tdet_n$ can be evaluated quickly, e.g., using
Gaussian elimination,  which exploits the large symmetry group of the determinant, e.g., 
$\tdet_n(x)=\tdet_n(AxB\inv)$ for any $n\times n$ matrices $A,B$ with determinant equal to one.

We will work exclusively
over the complex numbers and with homogeneous polynomials, the latter restriction only for convenience.
L. Valiant showed that  given  a homogeneous polynomial $P(y)$ in $M$
variables, there exists an $n$ and an 
affine linear map $\tilde A: \BC^M\ra \BC^{n^2}$ such that
$P=\det_n\circ \tilde A$. 
Such $\tilde A$ is called a {\it determinantal representation}
of $P$.
When $M=m^2$ and  $P$ is the 
 permanent polynomial 
\be 
\tperm_m(y):=\sum_{\s\in \FS_m}  y^1_{\s(1)}y^2_{\s(2)}
\cdots\ y^m_{\s(m)},
\ene 
L.~Valiant showed that one can take $n=O(2^m)$. 
As an algebraic analog of the $\p\neq\np$ conjecture, he also conjectured that one cannot do much better:

\begin{conjecture} [Valiant \cite{vali:79-3}]\label{valperdet}  Let $n(m)$ be a function of $m$ such that there exist     affine linear maps
$\tilde A_m: \BC^{m^2}\ra \BC^{n(m)^2}$ satisfying
\be\label{permrep}
\tperm_m=\tdet_{n(m)}\circ \tilde A_m.
\ene
 Then $n(m)$ grows faster than any polynomial.
\end{conjecture}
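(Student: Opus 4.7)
The plan is to attack this via a symmetrization strategy, acknowledging at the outset that an unconditional proof of Conjecture \ref{valperdet} is out of reach by any presently known method; what follows is a program that, if carried through, would at least reduce the conjecture to a structural question about determinantal representations. The permanent $\tperm_m$ has a very large stabilizer in $\GL_{m^2}$: besides scaling, it is stable under left and right multiplication of the matrix of variables by products of permutation and diagonal matrices, together with transposition. The guiding hope is that this symmetry must, up to polynomial overhead, be inheritable by any efficient determinantal representation.

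The first step is to prove an unconditional exponential lower bound within a restricted class, namely determinantal representations $\tilde A_m$ whose image respects the action of a large subgroup $H$ of the permanent's stabilizer --- concretely, left multiplication by permutation and diagonal matrices. Grenet's $O(2^m)$ construction is itself $H$-equivariant, so the target is to show it is essentially optimal: any $H$-equivariant $\tilde A_m$ satisfying \eqref{permrep} forces $n(m) \geq 2^m - 1$ or similar. The method is representation-theoretic: equivariance decomposes the affine map into isotypic pieces indexed by irreducible $H$-representations appearing in $\Sym^n(\BC^{n^2})$ and in a suitable slice through $[\tdet_n]$, and a counting/weight argument should recover Grenet's number from the combinatorics of the characters of $H$ acting on $\tperm_m$.

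The second step --- and the genuine obstacle --- is to argue that any optimal determinantal representation is polynomially related to an $H$-equivariant one. A naive averaging over the $H$-orbit of $\tilde A_m$ fails because the orbit closure is a continuous algebraic variety rather than a finite set, and a direct sum over it would blow $n$ up uncontrollably. One would instead have to exploit reductivity of $H$ and the geometry of the $GL_{n^2}$-orbit closure $\oldetc$: if the fiber over $[\tperm_m]$ inside the space of representations contains \emph{any} point, perhaps a GIT or Luna-slice argument produces an $H$-fixed point of comparable size, or at least an $H$-equivariant representation with only polynomial overhead.

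The hard part is decisively the second step; the first is a concrete, if intricate, representation-theoretic calculation that should yield to careful bookkeeping of characters and weights. The second step is where Valiant's conjecture actually lives, and any attempt to execute it must grapple with the fact that generic orbits of reductive groups need not contain equivariant representatives at all. This is exactly why the result advertised in the abstract is conditional: the plan delivers an unconditional lower bound in the symmetric category and a clean reduction --- the ``potential path'' --- but not the conjecture itself.
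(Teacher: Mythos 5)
You correctly recognize that the labeled statement is Valiant's conjecture itself, which the paper does not (and does not claim to) prove; the paper's contribution is the pair consisting of an unconditional exponential lower bound in the equivariant category, $\sdc(\tperm_m)=\binom{2m}{m}-1$ (Theorem~\ref{mainthm}), sharpened to $n\geq 2^m-1$ when only $N(T^{\GL(E)})$ is respected (Theorem~\ref{halfsdcperm}), and the conditional reduction: an affirmative answer to Question~\ref{symq} would imply the conjecture. Your two-step outline matches the paper's program, and your assessment that the second step is where the conjecture ``actually lives'' --- and is deliberately left as an open Question rather than attempted --- is exactly the paper's own framing.

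One small correction on how step one is actually executed. The paper does not decompose $\Sym^n(\BC^{n^2})$ or work with a slice through $[\tdet_n]$. Instead it normalizes $\tilde A(0)=\Lambda_{n-1}$ (using the regularity guaranteed by Lemma~\ref{lem:vzg}), identifies the stabilizer $\BG_{\tdet_n,\Lambda_{n-1}}$ and its Levi decomposition (Lemma~\ref{lem:Gdl}), invokes Malcev's theorem to conjugate a reductive lift of the symmetry group into the Levi factor $\GL(\ell_2)\times\GL(\BH)$, and then reads off from the block decomposition $\Mcc_n(\BC)=(\ell_1\oplus\BH)^*\ot(\ell_2\oplus\BH)$ a forced chain of irreducible submodules $\BH_1,\dots,\BH_k\subset\BH$ whose weights must step up one $\varepsilon_i$ at a time; summing the resulting binomial coefficients gives $\dim\BH\geq 2^m-2$. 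The permanent case has the additional wrinkle that $\Symgr_m$ has no finite ``universal cover,'' which the paper handles by passing to $\Altgr_m$ and constructing a subtorus $\tilde T$ so that weights can still be indexed by $X(T^{\GL(E)})$. Your description is the right flavor of argument but not the mechanism the paper uses.
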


To measure progress towards Conjecture~\ref{valperdet},  define
$\dc(\tperm_m)$ to be the smallest $n(m)$ such that there exists $\tilde A_m$ satisfying  \eqref{permrep}.  The conjecture
is that $\dc(\tperm_m)$ grows faster than any polynomial in $m$. 
Lower bounds on $\dc(\tperm_m)$ are:  $\dc(\tperm_m)>m$ (Marcus and Minc  \cite{MR0147488}), $\dc(\tperm_m)>1.06m$ (Von zur Gathen   \cite{MR910987}),
 $\dc(\tperm_m)>\sqrt{2}m-O(\sqrt{m})$ (Meshulam, reported in \cite{MR910987}, and Cai \cite{MR1032157}), with the current world record $\dc(\tperm_m)\geq \frac{m^2}2$
 \cite{MR2126826} by Mignon and the second author. (Over $\BR$, Yabe recently showed that $\dc_{\BR}(\tperm_m)\geq m^2-2m+2$ \cite{DBLP:journals/corr/Yabe15}, and
 in \cite{journals/cc/CaiCL10}
 Cai, Chen and Li  extended the $\frac{m^2}2$ bound to arbitrary fields.)

 \medskip

Inspired by {\it Geometric Complexity Theory} (GCT) \cite{MS1}, we focus on the {\it symmetries} of $\tdet_n$ and $\tperm_m$.
Let $V$ be a complex vector space of dimension $M$, let
$\GL(V)$ denote the group of invertible linear maps $V\ra V$. For
$P\in S^mV^*$, a homogeneous polynomial of degree $m$ on $V$, let
\begin{align*}
 G_P:&=\{ g\in \GL(V)\mid P(g\inv y)=  P(y)\quad \forall y\in V\}\\
\BG_P:&=\{ g\in \GL(V)\mid P(g\inv y)\in \BC^* P(y) \quad \forall y\in V\}
\end{align*}
denote the {\it   symmetry group (resp. projective symmetry group) }
of $P$. The function $\chi_P: \BG_P\ra \BC^*$ defined by the equality 
$P(g\inv y)=\chi_P(g)P(y) $ is group homomorphism called
the {\it character} of $P$.
For example $\BG_{\tdet_n}\simeq(\GL_n\times \GL_n)/\BC^*\rtimes
\BZ_2$ \cite{Frobdet}, where the $\GL_n\times \GL_n$ invariance comes
from $\det(AxB^{-1})=(\det A\det B\inv)\det(x)$ and the $\BZ_2$ is because $\tdet_n(x)=\tdet_n(x^T)$ where $x^T$ is the transpose of the matrix $x$.
Write $\t: \GL_n\times \GL_n\ra \GL_{n^2}$ for the map $(A,B)\mapsto \{ x\mapsto AxB^{-1}\}$. The character
$\chi_{\tdet_n}$ satisfies $\chi_{\tdet_n}\circ\tau(A,B)=\det(A)\det(B)^{-1}$.

As observed in \cite{MS1}, the permanent (resp. determinant) is {\it characterized
by its symmetries} 
in the sense that any polynomial $P\in  S^m\BC^{m^2*}$
 with a symmetry group $G_P$ such that $G_P\supseteq G_{\tperm_m}$
 (resp. $G_P\supseteq G_{\tdet_m}$)
is a scalar multiple of the permanent (resp. determinant). This property
is the cornerstone of GCT. The program outlined
in \cite{MS1,MS2} is an approach to Valiant's conjecture based on  
  the   functions on $\GL_{n^2}$ that respect the symmetry group
$G_{\tdet_n}$, i.e.,  are invariant under the action of $G_{\tdet_n}$.

The interest in considering $\BG_P$ instead of $G_P$ is that if $P$ is
characterized by $G_P$ among homogeneous polynomials of the same 
degree, then it is
characterized by the pair $(\BG_P,\chi_P)$ among all polynomials.  
This will be useful, since {\it a priori}, $\tdet_n\circ\tilde A$ need not be
homogeneous. 

Guided by the    principles of GCT, we ask: 

\begin{center}
  {\it What are the $\tilde A$ that respect the
symmetry group of the permanent?}
\end{center}

\medskip
To make this question   precise, let $\Mcc_n(\BC)$ denote the space of $n\times n$ matrices
and write $\tilde A=\Lambda + A$ where $\Lambda\in \Mcc_n(\BC)$ is
a fixed matrix and $A: \Mcc_m(\BC)\ra  \Mcc_n(\BC)$ is linear.

The subgroup $\BG_{\tdet_n}\subset \GL_{n^2}$
satisfies
\be\label{gdetneqn}
\BG_{\tdet_n}\simeq(\GL_n\times \GL_n)/\BC^*\rtimes \BZ_2.
\ene

\begin{definition}\label{detrepdef}
Let $\tilde A\,:\,V\longto \Mcc_n(\BC)$ be a determinantal representation of $P\in S^mV^*$.
Define 
$$\BG_{  A}=\{ g\in \BG_{\tdet_n}\mid g\cdot \Lambda=\Lambda {\rm\ and\ } g\cdot A(V)=A(V)\},
$$
  the {\it symmetry group of the determinantal
representation}  $\tilde A$ of $P$.
\end{definition}

The group $\BG_{  A}$ 
comes with a representation
$\rho_A\,:\,\BG_A\longto \GL(A(V))$ obtained by
restricting the action to $A(V)$. 
We assume that $P$ cannot be expressed using   $\dim(V)-1$ variables, i.e., that $P\not\in S^mV'$ for any
hyperplane $V'\subset V^*$. Then $A\,:\,V\longto A(V)$ is bijective. 
Let  $A^{-1}\,:\,A(V)\longto V$ denote its inverse. 
Set 
\begin{align}\label{rhoa}
\bar\rho_A\,:\,\BG_A&\longto \GL(V)\\
\nonumber g&\longmapsto
A\circ\rho_A(g)\circ A^{-1}.
\end{align}


\begin{definition} We say $\tilde A$ {\it respects the symmetries} of $P$ if \eqref{rhoa} is surjective.
We also refer to such $\tilde A$ as a {\it equivariant representation}
of $P$.

If $G$ is a subgroup of $\BG_P$, we say that {\it $\tilde A$ respects $G$}
if $G$ is contained in the image of $\bar\rho_A$.
\end{definition}

\begin{example}\label{Qex} Let $Q=\sum_{j=1}^{M} z_j^2\in S^2\BC^{M*}$ be a nondegenerate quadric. 
Then $\BG_Q=\BC^*\times O(M)$ where $O(M)=\{ B\in GL_{M}\mid B\inv=B^T\}$ is the orthogonal
group, as for such $B$, $B\cdot Q=\sum_{i,j,k}B_{i,j} B_{k,j}z_iz_k=\sum_{ij}\d_{ij}z_iz_j=Q$.
Consider the determinantal representation  
\be\label{quadrespect}
Q=\tdet_{M+1} \begin{pmatrix} 0 & -z_1& \cdots & -z_{M}\\
z_1 & 1 & & \\
\vdots & & \ddots &   \\
z_{M} & & & 1
\end{pmatrix}.
\ene
For  $(\l,B)\in \BG_Q$,  define an action on $Z\in \cM_{M+1}(\BC)$ by
$$
Z\mapsto
\begin{pmatrix} \lambda&0\\
0&B
\end{pmatrix} 
Z
\begin{pmatrix}
  \lambda^{-1}&0\\
0&B
\end{pmatrix}^{-1}.
$$
Write
$$
X=
\begin{pmatrix}
  x_1\\ \vdots\\x_{M}
\end{pmatrix}\qquad{\rm so}\qquad
\tilde A=
\begin{pmatrix}
  0&-X^T\\
X&\Id_{M}
\end{pmatrix}.
$$
The relation $B^{-1}=B^T$ implies 
$$
\begin{pmatrix}
  \lambda&0\\
0&B
\end{pmatrix}\cdot
\begin{pmatrix}
  0&-X^T\\
X&\Id_{M}
\end{pmatrix}
\cdot
\begin{pmatrix}
  \lambda^{-1}&0\\
0&B
\end{pmatrix}^{-1}
=\begin{pmatrix}
  0&-(\lambda BX)^T\\
\lambda BX&\Id_{M}
\end{pmatrix}.
$$
Taking $\tdet_{M+1}$ on both sides gives   
$$
\lambda^2Q(X)=(\lambda,B)\cdot Q(   X).
$$
Thus   $\tilde A$ respects  the symmetries of $Q$.
\end{example}

Roughly speaking, $\tilde A$ respects the symmetries of $P$ if for any
$g\in \BG_P$, one can recover the fact that   $P(g\inv x)=\chi_P(g)P(x)$ just by
applying Gaussian elimination and $\det(Z^T)=\det(Z)$ to $\tilde A$.

\begin{definition}
For $P\in S^mV^*$, define the {\it equivariant determinantal complexity} of $P$, denoted  $\sdc(P)$, to be   the smallest $n$ such that
there is an equivariant determinantal representation 
of $P$.
\end{definition}

Of course $\sdc(P)\geq \dc(P)$.
We do not know if $\sdc(P)$ is finite in general. Our main result
is  that  $\sdc(\tperm_m)$ is exponential in $m$.  

\section{Results}

\subsection{Main Theorem} 

\begin{theorem}\label{mainthm} Let $m\geq 3$. Then
   $\sdc(\tperm_m)= \binom{2m}{m}-1 \sim  4^m$. 
\end{theorem}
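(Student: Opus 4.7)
The plan is to prove the upper and lower bounds separately.

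For the upper bound $\sdc(\tperm_m)\le\binom{2m}{m}-1$, I would generalize Grenet's construction by indexing the rows and columns of $\tilde A$ (after removing one chosen index) by the $m$-subsets of $\{1,\dots,2m\}$. The ambient module is $\wedge^m(\BC^m\oplus\BC^m)$ modulo a trivial line, which carries a natural action of $\BG_{\tperm_m}$: the two copies of $(\BC^*)^m\rtimes\FS_m$ act on the two $\BC^m$ summands, and the transpose $\BZ_2$ swaps them. The entries of $\tilde A$ are dictated by transitions $S\to S'$ that exchange one index between the two halves, with matrix entry proportional to the corresponding variable $x^i_j$, while the diagonal entries come from $\Lambda$. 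One then checks directly that $\det(\tilde A)=\tperm_m$, by matching Laplace expansions with sequences of such transitions, and that $\BG_A$ surjects onto $\BG_{\tperm_m}$ via $\bar\rho_A$.

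For the lower bound, let $\tilde A=\Lambda+A:V\longto\Mcc_n(\BC)$ be any equivariant representation. Surjectivity of $\bar\rho_A$ together with $\BG_A\subseteq(\GL_n\times\GL_n)/\BC^*\rtimes\BZ_2$ yields two projective representations $\rho_L,\rho_R$ of $\BG_A$ on $\BC^n$, each of whose action factors through $\BG_{\tperm_m}$. The condition $\BG_A\cdot\Lambda=\Lambda$ intertwines $\rho_L$ and $\rho_R$, and $\BG_A\cdot A(V)=A(V)$ embeds $V\simeq\BC^m\otimes\BC^m$ as a $\BG_{\tperm_m}$-submodule of $\operatorname{Hom}(\BC^n,\BC^n)$. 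Decomposing $\BC^n$ into weight spaces under the maximal torus $(\BC^*)^m\times(\BC^*)^m$ of $\BG_{\tperm_m}$, combining with the $\FS_m\times\FS_m\rtimes\BZ_2$-action, and using the character constraint $\chi_{\tdet_n}\circ\bar\rho_A^{-1}=\chi_{\tperm_m}$, the embedding of $V$ forces (up to a global shift) the multiset of weights of $\rho_L$ to include the characteristic functions of all $m$-subsets of $\{1,\dots,2m\}$, giving $n\ge\binom{2m}{m}-1$.

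The main obstacle is the lower bound, specifically showing that the sharp count $\binom{2m}{m}-1$ cannot be evaded by clever choice of projective twists, unusual weight distributions on $\BC^n$, or a large kernel in the induced map $\BG_A\to\BG_{\tperm_m}$. The transpose equivariance is essential here: it forces the weights of $\rho_L$ and $\rho_R$ to be balanced between the two sides, so that the wedge module $\wedge^m(\BC^m\oplus\BC^m)$ really is the minimal representation containing all required weights. I expect the rigorous argument to proceed by first putting $(\rho_L,\rho_R)$ into a canonical form compatible with the $\GL_n\times\GL_n$-action and then running an explicit character computation on the individual weight spaces, with the $\FS_m\times\FS_m$-action ruling out any partial reduction.
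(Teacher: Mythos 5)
Your high-level plan points in the right direction --- lift symmetry to $\BG_A$, exploit the two projective representations of $\BG_{\tdet_n}$ on $\BC^n$, and run a weight-multiplicity count --- but as written there are real gaps in the lower bound, and they are exactly where the content of the theorem lies. First, you never normalize $\Lambda$: the paper first invokes Lemma~\ref{lem:vzg} (the codimension-5 bound on $\{\tperm_m=0\}_{sing}$) to force $\trank\tilde A(0)=n-1$, so one may assume $\Lambda=\Lambda_{n-1}$, and it is only then that the block decomposition $\Mcc_n(\BC)=(\ell_1\oplus\BH)^*\otimes(\ell_2\oplus\BH)$ and the explicit description of $\BG_{\tdet_n,\Lambda_{n-1}}$ in Lemma~\ref{lem:Gdl} become available. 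Without this normalization, your claim that fixing $\Lambda$ ``intertwines $\rho_L$ and $\rho_R$'' in a usable way is not substantiated. Second, your assertion that the actions of $\rho_L,\rho_R$ ``factor through $\BG_{\tperm_m}$'' is false in general and is the heart of the difficulty: $\bar\rho_A$ can have a large kernel, and $\BC^n$ is a $\BG_A$-module, not a $\BG_{\tperm_m}$-module. The paper devotes most of the proof of Theorem~\ref{halfsdcperm} to working around this, first using Malcev's theorem to conjugate a reductive subgroup $L'$ into the Levi factor $\GL(\ell_2)\times\GL(\BH)$, then --- because $\Symgr_m$ has no finite universal cover --- constructing a subtorus $\tilde T$, an integer $k_0$, and $\Altgr_m$-dominant weights $\chi_W$ to label irreducible $L'$-modules by $X(T^{\GL(E)})$-data. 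Your proposal simply assumes this labeling exists.

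Third, and most importantly, the assertion that the embedding of $V$ ``forces the multiset of weights of $\rho_L$ to include the characteristic functions of all $m$-subsets of $\{1,\dots,2m\}$'' is the conclusion, not an argument. The paper reaches the bound by building a chain of irreducible $L'$-submodules $\BH_1,\dots,\BH_k\subset\BH$ with $E\subset\BH_i^*\otimes\BH_{i+1}$ (this is the Pieri/length-increment step, using the inequality $\ell(\chi_{\BH_{i+1}})\le\ell(\chi_{\BH_i})+1$), then showing $\ell(\chi_{\BH_k})\ge m-1$ and $\dim\BH_{i_s}\ge\binom ms$; for the full equivariant case one runs this with $N(T^{\GL(E)})\times N(T^{\GL(F)})$ and gets $\binom{2m}m-2$. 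Your appeal to transpose equivariance to ``balance'' the weights is not what does the work --- the paper in fact quotients out the $\BZ_2$ early and works with an index-two subgroup mapping onto $T\rtimes\Altgr_m$. Finally, a smaller issue with the upper bound: building the representation inside $\wedge^m(\BC^m\oplus\BC^m)$ naturally introduces signs from wedge multiplication, which is what the paper wants for $\tdet_m$ (Proposition~\ref{detwithrespect}) but not for $\tperm_m$; for the permanent the paper deliberately replaces $\La kE$ by $(S^kE)_{reg}$ in Proposition~\ref{permwithrespect} precisely to kill those signs, and you would need to do the same.
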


There are several instances in complexity theory where an
optimal algorithm partially respects symmetry, e.g. Strassen's algorithm for $2\times 2$ matrix multiplication
respects the $\BZ_3$-symmetry of the matrix multiplication operator (see \cite[\S 4.2]{Lsimons}), but not
the $GL_2^{\times 3}$ symmetry.

For the purposes of Valiant's conjecture,
we ask the weaker question:

\begin{question}\label{symq} Does there exist a polynomial $e(d)$ such that
  $\sdc(\tperm_m)\leq e(\dc(\tperm_m))$?
\end{question}

Theorem~\ref{mainthm} implies:

\begin{corollary} If the answer to Question~\ref{symq} is affirmative, then
Conjecture~\ref{valperdet} is true.
\end{corollary}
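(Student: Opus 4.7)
The plan is a short contrapositive argument that simply composes the two quantitative statements already available, namely Theorem~\ref{mainthm} and the hypothesis of Question~\ref{symq}.

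First I would assume the hypothesis: there is a polynomial $e$ such that $\sdc(\tperm_m) \leq e(\dc(\tperm_m))$ for all $m$. Then I would suppose, for contradiction, that Conjecture~\ref{valperdet} fails, so that $\dc(\tperm_m)$ is bounded by some polynomial $p(m)$. Composing, I get
\[
\sdc(\tperm_m) \;\leq\; e(\dc(\tperm_m)) \;\leq\; e(p(m)),
\]
and the right-hand side is a polynomial in $m$.

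The second step is to invoke Theorem~\ref{mainthm}, which gives $\sdc(\tperm_m) = \binom{2m}{m} - 1 \sim 4^m$ for $m \geq 3$. Since $4^m$ grows faster than any polynomial in $m$, this contradicts the polynomial upper bound obtained above. Hence $\dc(\tperm_m)$ cannot be polynomially bounded, which is exactly Conjecture~\ref{valperdet}.

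There is no real obstacle here: the corollary is a direct logical consequence of Theorem~\ref{mainthm} and is essentially a one-line argument. The only thing to be slightly careful about is that the hypothesis of Question~\ref{symq} is uniform in $m$ (a single polynomial $e$ works for all $m$), which is precisely the formulation given, so the composition $e \circ p$ is again a single polynomial. All the genuine mathematical content sits in Theorem~\ref{mainthm}; the corollary itself is just a contrapositive packaging of that exponential lower bound.
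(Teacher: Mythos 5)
Your argument is correct and is exactly the intended one; the paper does not write out a proof, treating the corollary as an immediate consequence of Theorem~\ref{mainthm}, and the contrapositive you give (polynomial $\dc$ plus the hypothesis would force polynomial $\sdc$, contradicting the $\sim 4^m$ lower bound) is the only reasonable reading. The one implicit step worth noting is that one may take $e$ to be monotone without loss of generality so that $e\circ p$ is indeed a polynomial bound, but this is standard.
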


We have no opinion as to  what  the answer to Question~\ref{symq} should be, but as it provides a 
new potential path to proving Valiant's conjecture, it merits
further investigation.  
Note that Question~\ref{symq} is a {\it flip}  in the  
terminology  of \cite{Mflip},  since a positive answer is an  
existence result. 
It fits into the  more general question:  {\it  When an object
has symmetry, does it admit an  optimal expression that preserves its symmetry?}

\begin{example}  
Let $T\in W^{\ot d}$ be a symmetric tensor, i.e. $T\in S^dW\subset W^{\ot d}$.
Say  $T$ can be written as a sum of $r$ rank one tensors, then  P. Comon  conjectures \cite{Como02:oxford}  that it can be written
as a sum of $r$ rank one symmetric tensors. 
\end{example}

\begin{example} [Optimal Waring decompositions]\label{optimalwaring}
The optimal Waring decomposition of $x_1\dots\, x_n$, dating back at least  to
\cite{MR1573008} and proved to be optimal in \cite{MR2842085} is
\be\label{fischer}
 x_1\dots\, x_n= \frac{1}{2^{n-1} n!} \sum_{\stack{\epsilon \in \{-1,1\}^{n}}{\ep_1=1}}\big(
 \sum_{j=1}^n\ep_jx_j\big)^n
\Pi_{i=1}^n\ep_i   ,
\ene
a sum with $2^{n-1}$ terms. This decomposition has an $\FS_{n-1}$-symmetry but
not an $\FS_n$-symmetry, nor does it preserve the action of the torus $T^{SL_n}$ of diagonal matrices
with determinant one. One can obtain an $\FS_n$-invariant expression by doubling the size:
\be\label{fischerdoub}
 x_1\dots\, x_n= \frac{1}{2^{n } n!} \sum_{ \epsilon \in \{-1,1\}^{n} }\big(
 \sum_{j=1}^n\ep_jx_j\big)^n
\Pi_{i=1}^n\ep_i   ,
,
\ene
because
\begin{align*}
(- x_1 + \epsilon_2 x_2  + \dots +& \epsilon_{n } x_n)^n (-1)\epsilon_2 \dots\, \epsilon_{n }\\
=&(-1)^n(  x_1 + (-\epsilon_2) x_2 +  \dots + (-\epsilon_{n }) x_n)^n (-1)\epsilon_2 \dots\, \epsilon_{n }\\
 =& (  x_1 + (-\epsilon_2) x_2 +  \dots + (-\epsilon_{n }) x_n)^n (-\epsilon_2) \dots\,(- \epsilon_{n }).
\end{align*}

The optimal Waring decomposition of the permanent is not known, but it
is known to be of size greater than ${\binom n{\lfloor n/2\rfloor}}^2\sim 4^n/\sqrt{n}$.
The Ryser-Glynn formula \cite{MR2673027} is
\be\label{ryserformula}
  \tperm_n(x) = 2^{-n+1} \sum_{\substack{\epsilon \in \{-1,1\}^n \\ \epsilon_1=1}}
    \prod_{1 \leq i \leq n} \sum_{1 \leq j \leq n} \epsilon_i \epsilon_j x_{i,j} ,
\ene
the outer sum taken over $n$-tuples $(\epsilon_1=1, \epsilon_2,\dots,\epsilon_n)$.
This $\FS_{n-1}\times \FS_{n-1}$-invariant formula can also be made $\FS_n\times\FS_n$-invariant
by enlarging  it by a factor of $4$, to get a $\FS_n\times \FS_n$ homogeneous depth three formula
that is within a factor of four of the best known. Then expanding each monomial above, using Equation  \eqref{fischerdoub},
one gets a $\FS_n\times \FS_n$-Waring expression within a factor of $O(\sqrt{n})$ of the lower bound.
\end{example}

\begin{example}\label{aviex}Examples regarding equivariant representations of $\FS_N$-invariant functions from the Boolean world give
inconclusive indications regarding  Question \ref{symq}.

The $MOD_m$-degree of a Boolean function
$f(x_1\hd x_N)$ is the smallest degree of any polynomial $P\in \BZ[x_1\hd x_N]$
such that $f(x)=0$ if and only if $P(x)=0$ for all $x\in \{ 0,1\}^N$. 
The known upper bound for the $MOD_m$-degree of the Boolean  $OR$ function ($OR(x_1\hd x_N)=1$ if any $x_j=1$ and is zero
if all $x_j=0$) is attained by symmetric polynomials \cite{MR1313536}. Moreover in \cite{MR1313536} it is also shown that
this bound cannot be improved with symmetric polynomials, and it is far from the known lower bound.

The boolean majority  function $MAJ(x_1\hd x_N)$ takes on $1$ if at least half the $x_j=1$ and zero otherwise.
The best monotone Boolean formula for $MAJ$ \cite{MR756162}  is polynomial in $N$ and  attained using random functions, and it is expected that
the only symmetric monotone
formula for majority is the trivial one, disjunction of all $\frac n2$-size
subsets (or its dual), which is of exponential size.

\end{example}

\begin{question} Does every $P$ that is determined by its 
symmetry group admit a determinantal representation that
respects its symmetries? For those $P$ that do, how much
larger must such a determinantal representation be from 
the size of a minimal one?
\end{question}

\subsection{Grenet's formulas}\label{grenetsect}
The starting point of our investigations was the result in \cite{2015arXiv150502205A} 
that $\dc(\tperm_3)=7$, in particular 
  Grenet's representatation  \cite{Gre11} for $\tperm_3$:
  \be\label{grenet3}
\tperm_3(y)=\tdet_7  
\begin{pmatrix}
0&0&0&0& y_3^3&  y_2^3& y_1^3\\
y^1_1 & 1&  &    & & &  \\
y^1_2 &  & 1 &    & & &  \\
y^1_3 &  &   & 1   & & &  \\
& y^2_2 &  y^2_1 &0& 1& & \\
& y^2_3 & 0 & y^2_1&  & 1& \\
&0  & y^2_3 & y^2_2&  &  & 1
\end{pmatrix},
\ene
 is optimal. We sought  to
 understand \eqref{grenet3} from a geometric perspective.  A first observation is that
 it, and more generally Grenet's representation
 for $\tperm_m$ as a determinant of size $2^m-1$ respects about half the symmetries of
 the permanent.  In particular,   the optimal expression for $\tperm_3$ respects
 about half its symmetries.
 
 

\medskip

To explain this observation, introduce
the following  notation. Write $\Mcc_m(\BC)=\Hom(F,E)=F^*\ot E$, where
$E,F=\BC^m$.
This distinction of the two copies of $\BC^m$  clarifies 
the action of the group $\GL( E)\times\GL(F)$ on $\Hom(F,E) $. This action is  
$(A,B).x=AxB^{-1}$, for any $x\in \Hom(F,E)$ and $(A,B)\in\GL( E)\times\GL(F)$. 
Let $T^{\GL(E)}\subset \GL(E)$ consist of the diagonal matrices 
and  let  $ N(T^{\GL(E)})= T^{\GL(E)}\rtimes \Symgr_m\subset \GL(E)$    be its normalizer, where
$\Symgr_m$ denotes the group of permutations on $m$
elements. Similarly for  $T^{\GL(F)}$ and $N(T^{\GL(F)})$.
Then  
$\BG_{\tperm_m}\simeq  [(N(T^{\GL(E)})\times N(T^{\GL(F)}))/\CC^*]\rtimes \BZ_2$, where
the embedding of $(N(T^{\GL(E)})\times N(T^{\GL(F)}))/\CC^*$ in
$\GL(\Hom(F,E))$ is given by the action above and  the term $\BZ_2$
corresponds to transposition. 

The following refinement  of Theorem~\ref{mainthm} asserts that to get
an exponential lower bound it is sufficient   to respect about half the symmetries of
 the permanent. 

\medskip
 \begin{theorem}
   \label{halfsdcperm}
Let $m\geq 3$. Let $\tilde A_m\, :\, \cM_m(\BC)\lra \cM_n(\BC)$
 be a determinantal representation of $\tperm_m$ that respects
 $N(T^{\GL(E)})$. Then $n\geq 2^m-1$.

Moreover, Grenet's determinantal representation of $\tperm_m$ respects
$N(T^{\GL(E)})$ and has size $2^m-1$.
 \end{theorem}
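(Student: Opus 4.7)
The theorem has two parts—a lower bound $n \geq 2^m - 1$ under $N(T^{\GL(E)})$-equivariance and an explicit matching construction—and my plan is to address them separately.

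For the Grenet half (the verification), I would describe the $(2^m-1) \times (2^m-1)$ matrix intrinsically: columns are indexed by proper subsets $S \subsetneq [m]$ and rows by nonempty subsets; the $A$-entry from column $S$ to row $S \cup \{j\}$ is $x^{|S|+1}_j$ for $j \notin S$, and $\Lambda$ carries $1$'s on the matched diagonal. Cycle covers of the associated digraph correspond to increasing chains of subsets, yielding $\tperm_m(x)$ immediately. To check that $\tilde A$ respects $N(T^{\GL(E)})$, I would exhibit for each $g \in N$ explicit intertwiners $(P_g, Q_g)$: for $t \in T$, diagonal matrices whose $S$-entry is the character $t_1 \cdots t_{|S|}$; for $\sigma \in \Symgr_m$, a suitable (generally non-monomial) matrix reshuffling the size-$|S|$ blocks, possibly combined with the transposition involution in $\BG_{\tdet_n}$ to reconcile the asymmetry between row- and column-indexing.

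For the lower bound, my plan starts with $T$-equivariance: since $T$ is a torus, the $T$-action lifts to homomorphisms $P, Q : T \to \GL_n$ with $P(t)\tilde A(x) Q(t)^{-1} = \tilde A(tx)$, yielding weight decompositions $\BC^n_{\mathrm{src}} = \bigoplus_\alpha U_\alpha$ and $\BC^n_{\mathrm{tgt}} = \bigoplus_\beta V_\beta$ in $X(T) \cong \BZ^m$. Compatibility forces $\Lambda_{ij} \neq 0 \Rightarrow \beta_i = \alpha_j$ and the coefficient of $x^k_l$ in $A(x)_{ij}$ to be supported on pairs with $\beta_i - \alpha_j = \epsilon_k$; the determinantal normalization then yields $\sum_i \beta_i - \sum_j \alpha_j = \epsilon_1 + \cdots + \epsilon_m$. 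Expanding $\det(\Lambda + A(x))$ combinatorially, each permanent monomial $x^1_{\tau(1)} \cdots x^m_{\tau(m)}$ arises from cycle covers of $[n]$ using $m$ ``$A$-edges'' of types $\epsilon_1, \ldots, \epsilon_m$ (one of each) together with $n-m$ ``$\Lambda$-edges''; when the $m$ $A$-edges lie on a single cycle, the column-weights visited traverse a chain of partial sums $\gamma_0, \gamma_0 + \epsilon_{i_1}, \ldots, \gamma_0 + \epsilon_{i_1} + \cdots + \epsilon_{i_{m-1}}$.

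Leveraging $\Symgr_m$-equivariance then closes the loop: the lifts $\hat\sigma \in \BG_A$ relate contributing cycle covers across different permanent monomials, forcing a large multiset of ``effective'' weights. Grenet's own structure shows the multiset of weights need \emph{not} be $\Symgr_m$-invariant in the naive sense—in Grenet, weights depend only on the size $|S|$—so one cannot simply count distinct $\Symgr_m$-orbits of weights. Rather, the target bound is a level-wise multiplicity statement: at each level $0 \leq k \leq m-1$, the multiplicity of level-$k$ column-weights must be at least $\binom{m}{k}$, so that $n \geq \sum_{k=0}^{m-1}\binom{m}{k} = 2^m - 1$. I would prove this by assigning, to each of the $\binom{m}{k}$ chains of length $k$ terminating in a size-$k$ subset, an ``intermediate state'' contributing a linearly independent column weight (in the effective sense allowed by the $\Symgr_m$-twisting of $T$-weight spaces). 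The hardest part will be this cycle-structure and multiplicity step: I must rule out (or otherwise absorb) contributions in which the $A$-edges split across multiple cycles so that no single cycle traverses a full chain of partial sums, and simultaneously navigate the subtle interplay between the $T$-grading (which may not separate subsets of equal size) and the $\Symgr_m$-action (which does)—this is the combinatorial/representation-theoretic heart of the proof.
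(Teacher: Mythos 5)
Your lower-bound plan takes a genuinely different route from the paper, but it leaves the decisive step unproven, and you say so yourself. The paper never expands $\det(\Lambda+A(x))$ over cycle covers and never works with a naive lift of $T$ to $\GL_n$. Instead it normalizes $\tilde A(0)=\Lambda_{n-1}$ (Lemma~\ref{lem:vzg}, which applies to $\tperm_m$ by von zur Gathen's codimension bound), invokes Malcev's theorem to find a \emph{reductive} subgroup $L$ of $\BG_{\tdet_n,\Lambda_{n-1}}$ conjugate into the Levi factor $(\GL(\ell_2)\times\GL(\BH))\rtimes\BZ_2$ with $\bar\rho_A(L)\supseteq N(T^{\GL(E)})$, passes to an index-$\leq 2$ subgroup $L'$ with $\bar\rho_A(L')=T\rtimes\Altgr_m$ (to dispose of the $\transp$ factor), and then---crucially, since $\Symgr_m$ has no finite linear universal cover---constructs an auxiliary subtorus $\tilde T\subset Z((L')^\circ)$ and a scaling integer $k_0$ so that irreducible $L'$-modules can be labeled by $\Altgr_m$-dominant weights $\chi_W\in X(T)$. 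The dimension bound then flows from a chain $\BH_1,\dots,\BH_k$ of irreducible $L'$-submodules of $\BH$ with $E\subset\BH_i^*\ot\BH_{i+1}$ and $E\simeq\BH_k^*\ot\ell_2$, the length inequality $\ell(\chi_{\BH_{i+1}})\leq\ell(\chi_{\BH_i})+1$, a one-parameter subgroup argument forcing $\ell(\chi_{\BH_k})\geq m-1$, and $\Altgr_m$-transitivity on $s$-subsets of $[m]$ (this is where $m\geq 3$ enters) giving $\dim\BH_{i_s}\geq\binom{m}{s}$, whence $\dim\BH\geq 2^m-2$.

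In your version, the assertion that ``since $T$ is a torus, the $T$-action lifts to homomorphisms $P,Q:T\to\GL_n$'' is not automatic from surjectivity of $\bar\rho_A$: the preimage $\bar\rho_A^{-1}(T)$ need not contain a torus mapping isomorphically onto $T$, and---more seriously---you need a lift of $T$ \emph{compatible} with a lift of $\Symgr_m$, which is exactly where the lack of a finite universal cover of $\Symgr_m$ bites and where the paper's $\tilde T$/$k_0$ device does real work. Your ``level-wise multiplicity'' claim (that level-$k$ column weights occur with multiplicity at least $\binom{m}{k}$) is precisely the bound that has to be proved, and you flag it as the open hard part; the cycle-cover picture does not on its own exclude contributions where the $m$ $A$-edges spread over several cycles, nor does it control weight spaces that are not multiplicity-free, and the paper's module-chain argument sidesteps these issues entirely by never touching the combinatorial expansion of $\det$. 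Finally, a small but genuine error in the Grenet verification: the diagonal intertwiner should have $S$-entry $\prod_{i\in S}t_i$, not $t_1\cdots t_{|S|}$; the column weights are $\sum_{i\in S}\varepsilon_i$ and therefore \emph{do} distinguish subsets of the same cardinality, so the multiset of weights in Grenet's representation \emph{is} $\Symgr_m$-stable---your remark to the contrary traces back to this miscomputation.
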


\medskip
We now   explain Grenet's expressions  from a representation-theoretic  perspective.
Let $[m]:=\{ 1\hd m\}$ and let  $k\in[ m ]$. Note that  $S^kE$ is an   irreducible $\GL(E)$-module but it is
is not irreducible as an $N(T^{\GL(E)})$-module.
For example, let $e_1\hd e_m$ be a basis of $E$,  and let $(S^kE)_{reg}$ denote the span of $\prod_{i\in I} e_i$, for $I\subset [m]$ of
cardinality $k$ (the space spanned by the square-free monomials, also known as the space of {\it regular} weights): $(S^kE)_{reg}$  is an irreducible
$N(T^{\GL(E)})$-submodule of $S^kE$. 
 Moreover, there exists a unique $N(T^{\GL(E)})$-equivariant
 projection $\pi_k$ from $S^kE$ to $(S^kE)_{reg}$.

For $v\in E$, define  $s_k(v): (S^kE)_{reg}  
 \ra (S^{k+1}E)_{reg}$ to be 
multiplication by $v$ followed by $\pi_{k+1}$.
Alternatively, $(S^{k+1}E)_{reg}$ is an $ N(T^{\GL(E)})$-submodule of $E\ot (S^{k}E)_{reg}$, and
$s_k: E\ra   (S^{k}E)_{reg}^*\ot  (S^{k+1}E)_{reg}$ is the unique $ N(T^{\GL(E)})$-equivariant inclusion.
Let $\Id_W: W\ra W$ denote the identity map on the vector space $W$. Fix a basis $f_1\hd f_m$ of $F^*$. 

\medskip
\begin{proposition}\label{grenetrep}
The following is Grenet's determinantal representation of $\tperm_m$.
Let $\BC^n=\bigoplus_{k=0}^{m-1}(S^kE)_{reg}$, so $n =2^m-1$,
 and identify $S^0E\simeq (S^mE)_{reg}$.
Set $$\Lambda_0= \sum_{k=1}^{m-1}\Id_{(S^k E)_{reg}}
$$ and  define
\be
\label{Grenetexpr}\tilde A=\Lambda_0 + \sum_{k=0}^{m-1} s_k\ot f_{k+1}.
\ene
Then $(-1)^{m+1}\tperm_m= \tdet_n\circ \tilde A$. To obtain the permanent exactly, replace
$\Id_{(S^1E)_{reg}}$ by $(-1)^{m+1}\Id_{(S^1E)_{reg}}$ in the formula for $\Lambda_0$.

In bases respecting the block decomposition induced from the direct sum, the linear part, other than the last term which lies
in the upper right block,   lies just below the
diagonal blocks, and all blocks other than  the upper right block  and  the diagonal and sub-diagonal blocks,
are zero. 

Moreover  $ N(T^{\GL(E)})\subseteq \bar\rho_A(\BG_A)$.
\end{proposition}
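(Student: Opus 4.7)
The plan is to (i) describe $\tilde A$ explicitly in the square-free monomial basis of each $(S^kE)_{reg}$, (ii) compute $\tdet_n\circ\tilde A$ via the Leibniz formula, and (iii) build, for every $g\in N(T^{\GL(E)})$, an element $\tilde g\in\BG_A$ such that $\bar\rho_A(\tilde g)=g$.

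First I would fix the basis $\{e_I:I\subset[m],\,|I|=k\}$ of $(S^kE)_{reg}$, giving $\dim(S^kE)_{reg}=\binom{m}{k}$ and hence $n=\sum_{k=0}^{m-1}\binom{m}{k}=2^m-1$. Unwinding the definition, $s_k(e_j)$ sends $e_I$ to $e_{I\cup\{j\}}$ if $j\notin I$ and to $0$ otherwise. Consequently, the $(k+1,k)$-block of the linear part $A$ has entry $y^{k+1}_j$ at row $I\cup\{j\}$, column $I$ (and zero elsewhere); the top-right $(0,m-1)$-block has entry $y^m_j$ at row $\emptyset$, column $[m]\setminus\{j\}$; every other off-diagonal block vanishes; and each diagonal block is the identity except for the scalar $(0,0)$-block, which is $0$. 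This already gives the block shape asserted in the proposition.

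Next I would run the Leibniz expansion. A permutation $\sigma$ of the index set $\{(k,I)\}$ contributes a nonzero term only when every entry $\tilde A_{(k,I),\sigma(k,I)}$ is nonzero. The vanishing $(0,\emptyset)$-diagonal entry forces $\sigma(0,\emptyset)=(m-1,[m]\setminus\{j_1\})$ for some $j_1$; that column being then taken, the row $(m-1,[m]\setminus\{j_1\})$ must descend to level $m-2$, and iterating produces a single $m$-cycle
$$(0,\emptyset)\to(m-1,[m]\setminus\{j_1\})\to(m-2,[m]\setminus\{j_1,j_2\})\to\cdots\to(1,\{j_m\})\to(0,\emptyset),$$
with $(j_1,\ldots,j_m)$ ranging over the permutations of $[m]$ and all other rows fixed along the diagonal. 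The product of entries on the cycle equals $y^m_{j_1}y^{m-1}_{j_2}\cdots y^1_{j_m}$, and the sign of an $m$-cycle is $(-1)^{m-1}=(-1)^{m+1}$; re-indexing $\tau(k):=j_{m+1-k}$ and summing over $\tau\in\Symgr_m$ yields $\tdet_n\circ\tilde A=(-1)^{m+1}\tperm_m$. Replacing $\Id_{(S^1E)_{reg}}$ by $(-1)^{m+1}\Id_{(S^1E)_{reg}}$ multiplies each nonvanishing term by the additional factor $((-1)^{m+1})^{m-1}$ coming from the $m-1$ rows of the $(S^1E)_{reg}$-block outside the cycle, giving total sign $(-1)^{m(m+1)}=1$, so the modified representation computes $\tperm_m$ exactly.

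Finally, for the equivariance statement I would let $\rho_k(g)\in\GL((S^kE)_{reg})$ be the restriction of the natural $\GL(E)$-action, and for $g\in N(T^{\GL(E)})$ set
$$X:=\mathrm{diag}(\det(g),\rho_1(g),\ldots,\rho_{m-1}(g)),\qquad Y:=\mathrm{diag}(1,\rho_1(g),\ldots,\rho_{m-1}(g)),$$
so that $\tilde g:=(X,Y)\in\GL_n\times\GL_n$ acts on $\Mcc_n(\BC)$ by $M\mapsto XMY^{-1}$. The choice $Y_0=1$ matches the trivial action expected by the source of $s_0$, while $X_0=\det(g)$ matches the $(S^mE)_{reg}$-action expected by the target of $s_{m-1}$. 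Since $X_kY_k^{-1}=\Id$ for $k\geq1$ and both act trivially on the vanishing $(0,0)$-block, $\tilde g\cdot\Lambda_0=\Lambda_0$; and the $N(T^{\GL(E)})$-equivariance of each $s_k$ gives $X\,A(v)\,Y^{-1}=A(g\cdot v)$ for all $v\in V=F^*\ot E$, where $g$ acts on $V$ through its action on $E$. Hence $\tilde g\in\BG_A$ and $\bar\rho_A(\tilde g)=g$, establishing $N(T^{\GL(E)})\subseteq\bar\rho_A(\BG_A)$. The main obstacle is the sign bookkeeping in step (ii), particularly combining the sign of the $m$-cycle with the parity correction in the modified $\Lambda_0$; once the block structure and the equivariance of the $s_k$ are in hand, the rest is formal.
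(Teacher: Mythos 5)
Your argument is correct in substance but takes a genuinely different route from the paper's. The paper proves Proposition~\ref{grenetrep} by \emph{reduction to the determinant case}: it observes that in the square-free monomial bases the matrix of $s_k(v)$ is obtained from the matrix of $ex_k(v)$ by deleting all the signs, so the polynomial computed by \eqref{Grenetexpr} is the ``sign-stripped'' version of the one computed by \eqref{regdet}, and then inherits the sign bookkeeping from the Laplace expansion already carried out in the proof of Proposition~\ref{dethalfrep}. Your proof instead performs the Leibniz expansion of $\tdet_n\circ\tilde A$ from scratch, identifying the contributing permutations as single $m$-cycles through the zero $(0,\emptyset)$ diagonal entry and computing the sign $(-1)^{m-1}=(-1)^{m+1}$ directly. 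Both are valid; yours is self-contained and arguably cleaner, while the paper's is shorter given that Proposition~\ref{dethalfrep} has already been established (and fits the larger theme of transporting statements between $\tdet_m$ and $\tperm_m$). Your construction of an explicit $\tilde g=(X,Y)$ also makes the equivariance claim $N(T^{\GL(E)})\subseteq\bar\rho_A(\BG_A)$ more transparent than the paper, which essentially treats it as evident from the $N(T^{\GL(E)})$-equivariance of the $s_k$.

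One small correction: the scalar block $X_0=\det(g)$ is not quite right. The group $N(T^{\GL(E)})$ acts on the line $(S^mE)_{reg}=\langle e_1\cdots e_m\rangle$ by the character $g=P_\sigma D\mapsto\det(D)$ (a permutation matrix fixes the symmetric monomial $e_1\cdots e_m$, hence acts trivially), whereas $\det(g)=\operatorname{sgn}(\sigma)\det(D)$. For an odd permutation these differ by a sign, and with your $X_0$ the $(0,m-1)$-block of $X\,A(v)\,Y^{-1}$ equals $-s_{m-1}\bigl((gv)_m\bigr)$ rather than $s_{m-1}\bigl((gv)_m\bigr)$, so $\bar\rho_A(\tilde g)\neq g$. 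Replacing $X_0$ by the character $\det(D)$ of the $(S^mE)_{reg}$-action fixes this and the rest of your verification goes through unchanged.
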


\subsection{An equivariant representation of the permanent}

We   now  give a minimal equivariant determinantal representation
of $\tperm_m$. By Theorem~\ref{mainthm},  its size is $\binom{2m}{m}-1$.
For $e\ot f\in E\ot F^*$, let  
$S_k(e\ot f): (S^kE)_{reg}  \ot (S^kF^*)_{reg} 
 \ra (S^{k+1}E)_{reg}\ot (S^{k+1}F^*)_{reg}$ be 
multiplication by $e$ on the first factor and $f$ on the second followed by projection into $(S^{k+1}E)_{reg}\ot (S^{k+1}F^*)_{reg}$.
Equivalently,
$$
S_k:(E\ot F^*)\ra  ((S^kE)_{reg}  \ot (S^kF^*)_{reg} )^*\ot   (S^{k+1}E)_{reg}\ot (S^{k+1}F^*)_{reg}
$$ 
is the unique $  N(T^{\SL(E)})\times   N(T^{\SL(F )})$
equivariant inclusion.

\medskip
\begin{proposition}\label{permwithrespect}
The following is an equivariant determinantal representation of $\tperm_m$:
Let $\BC^n=\oplus_{k=0}^{m-1}(S^kE)_{reg}\ot (S^kF^*)_{reg}$, so
$ n=\binom{2m}{m}-1 \sim  4^m$. Fix a linear isomorphism  $  S^0E\ot S^0F^*\simeq (S^mE)_{reg}\ot(S^mF^*)_{reg} $. 
Set 
$$\Lambda_0=   
\sum_{k=1}^{m-1}\Id_{(S^k E)_{reg}\ot (S^k F^*)_{reg}}
$$ 
and  define
\be
\label{symGrenetexpr}\tilde A=  (m!)^{\frac {-1}{n-m}}\Lambda_0  +
\sum_{k=0}^{m-1} S_k.
\ene
Then  $(-1)^{m+1}\tperm_m= \tdet_n\circ \tilde A$. 
In bases respecting the block structure induced by the direct sum, except for $S_{m-1}$, which lies in the 
upper right hand block, the linear part   lies just below the
diagonal block. 
\end{proposition}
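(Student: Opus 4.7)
The plan is to split the proof into two parts: verifying the identity $\tdet_n\circ\tilde A=(-1)^{m+1}\tperm_m$ and establishing equivariance. The dimension count $n=\sum_{k=0}^{m-1}\binom{m}{k}^2=\binom{2m}{m}-1$ is the Vandermonde identity, and the block structure claim is immediate from the definitions of $\Lambda_0$ and the $S_k$.

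For the determinantal identity, I expand $\tdet_n\tilde A$ as a sum over permutations $\sigma$ of the index set, with basis vectors labeled by triples $(k,I,J)$ where $I,J\subseteq[m]$ and $|I|=|J|=k$. The block structure forces each nonvanishing $\sigma$ to pair rows in block $k\geq 1$ either with the scaled diagonal entry or with an $S_{k-1}$ entry, and to pair the single row in block $0$ with a column in block $m-1$ via $S_{m-1}$. A counting argument balancing ``moving'' vs.\ ``staying'' rows at the block level (letting $a_p$ be the number of moving rows in block $p$, the equations $b_p+a_{p+1}=\binom{m}{p}^2$ and $a_0=1$ force $a_p=1$ for all $p$) shows that exactly one row per block moves, and the matching of staying columns then determines the rest: the contributing permutations are single $m$-cycles at the block level together with fixed points, of sign $(-1)^{m+1}$. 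Non-vanishing of such a cycle further requires the moving-row indices to form complete flags $\emptyset=I_0\subset\cdots\subset I_m=[m]$ and $\emptyset=J_0\subset\cdots\subset J_m=[m]$, equivalent to pairs $(\sigma,\tau)\in\Symgr_m\times\Symgr_m$ via $I_p=\{\sigma(1),\ldots,\sigma(p)\}$, $J_p=\{\tau(1),\ldots,\tau(p)\}$. Each pair contributes $c^{n-m}\prod_p x_{\sigma(p),\tau(p)}$ with $c=(m!)^{-1/(n-m)}$. Since this product depends only on $\tau\sigma^{-1}$, summing over $\Symgr_m\times\Symgr_m$ yields $m!\cdot\tperm_m$, and $c^{n-m}=1/m!$ gives the claimed $(-1)^{m+1}\tperm_m$.

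For equivariance, given $(g_1,g_2)\in N(T^{\GL(E)})\times N(T^{\GL(F)})$, let $g_1^{(k)}$ and $g_2^{*(k)}$ denote the induced actions on $(S^kE)_{reg}$ and $(S^kF^*)_{reg}$ (the latter via the contragredient action on $F^*$). The maps $S_k$ are intrinsically $N(T^{\GL(E)})\times N(T^{\GL(F)})$-equivariant by their definition through multiplication and projection to regular weights. I set $P|_k=Q|_k=g_1^{(k)}\otimes g_2^{*(k)}$ for $k\geq 1$, $Q|_0=1$, and $P|_0=\det(g_1)\det(g_2)^{-1}$, the latter being the action on $(S^mE)_{reg}\otimes(S^mF^*)_{reg}$ transported to block $0$ via the fixed isomorphism. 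Then $P\Lambda Q^{-1}=\Lambda$ since $P|_k=Q|_k$ for $k\geq 1$, and $PS_k(v)Q^{-1}=S_k((g_1,g_2)\cdot v)$ for all $k$, so $(P,Q)\in\BG_A$ and $\bar\rho_A(P,Q)$ is the action of $(g_1,g_2)$ on $V=E\otimes F^*$. For the transposition $\BZ_2$, the construction is symmetric in $E$ and $F^*$; using the duality $(S^kE)_{reg}\cong(S^{m-k}E)_{reg}^*$, an identification $E\cong F$, and the block-reversal $k\mapsto m-1-k$, one builds an involution of $\BC^n$ that, combined with the transposition in $\BG_{\tdet_n}$, lies in $\BG_A$ and implements the transposition symmetry of $\tperm_m$.

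The main subtlety is the block-$0$ bookkeeping: block $0$ plays two roles — as source of $S_0$ (where the natural action on $S^0E\otimes S^0F^*$ is trivial) and, via the identification with $(S^mE)_{reg}\otimes(S^mF^*)_{reg}$, as target of $S_{m-1}$ (where the natural action is $\det(g_1)\det(g_2)^{-1}$). Decoupling $P|_0$ and $Q|_0$ (the very freedom the two-sided GL action affords) resolves this tension and is exactly what upgrades the Grenet-type one-sided invariance of Theorem~\ref{halfsdcperm} to the full equivariance for the doubled-size representation of Proposition~\ref{permwithrespect}.
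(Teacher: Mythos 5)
Your proof is correct but takes a genuinely different route from the paper's. The paper proves Propositions~\ref{grenetrep} and~\ref{permwithrespect} together, by transfer from the determinant case (Propositions~\ref{dethalfrep} and~\ref{detwithrespect}): $s_k,S_k$ agree with $ex_k,EX_k$ on the standard bases up to signs, so the polynomial computed by \eqref{symGrenetexpr} is the one computed by \eqref{symdetexpr} with all variables appearing positively, and equivariance is inherited from the $\tDet$-functorial commutative diagrams used for $\tdet_m$; the scalar is pinned down there by specializing to diagonal matrices and recognizing Grenet's formula on a rank-one matrix. You instead compute $\tdet_n\circ\tilde A$ directly: the block-level flow count (forcing exactly one ``moving'' index per block) isolates the contributing permutations as $m$-cycles of sign $(-1)^{m+1}$ times $n-m$ diagonal fixed points, the flag-pair parametrization groups the $(m!)^2$ cycles into $m!$ copies of each permanent monomial, and the scaling $(m!)^{-1/(n-m)}$ cancels the overcount exactly. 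You also build the lifts $(P,Q)\in\BG_A$ explicitly, and your observation that block $0$ must carry the character $\det(g_1)\det(g_2)^{-1}$ as the target of $S_{m-1}$ while acting trivially as the source of $S_0$---a tension resolved precisely because the two-sided $\GL\times\GL$ action lets $P|_0\neq Q|_0$---is a nice conceptual explanation of why the doubled-size representation captures the full symmetry group. Your route is more self-contained (no dependence on the determinant propositions, and it derives the identity outright rather than via ``characterized by symmetries''), at the cost of more bookkeeping; the paper's transfer is shorter but requires the determinant results as input.

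One caveat on the transpose: your sketch (duality $(S^kE)_{reg}\cong(S^{m-k}E)_{reg}^*$ together with block reversal $k\mapsto m-1-k$, composed with $\transp\in\BG_{\tdet_n}$) appears to have an off-by-one mismatch between the block reversal and the degree shift the duality induces, and you do not verify it. Fortunately a much simpler construction suffices here: fixing an identification $E\cong F$, the involution $B$ of $\BC^n$ that swaps the two tensor factors on each block $(S^kE)_{reg}\ot(S^kF^*)_{reg}$ satisfies $B\Lambda_0 B^{-1}=\Lambda_0$ and $B\,S_k(u)\,B^{-1}=S_k(u^T)$, so $(B,B)\in\BG_A$ already implements the transposition of $\tperm_m$ without invoking the $\BZ_2$ factor of $\BG_{\tdet_n}$ at all. (For the determinant the paper handles this step via the commutative diagram with the map $\Delta\transp$.)
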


\subsection{Determinantal representations of quadrics}\label{quadricsres}
It will be instructive to examine other polynomials determined by their symmetry groups. Perhaps
the simplest such is a nondegenerate  quadratic form. 

Let
$Q=\sum_{j=1}^s x_jy_j$ be a non-degenerate  quadratic
form in $2s$ variables. The polynomial $Q$ is characterized by its symmetries.
By \cite{MR2126826}, if $s\geq 3$, the smallest  determinantal representation
of $Q$ is of size $s+1$:
\be\label{spone}
\tilde A= \begin{pmatrix}
0& -x_1& \cdots & -x_s\\ 
y^1 & 1 & & \\
\vdots & &\ddots   &  \\
y^s & & & 1\end{pmatrix}.
\ene
As described in \S\ref{sec:quad}, this representation respects about \lq\lq half\rq\rq\ the symmetry group $\BG_Q$.
We show  in  \S\ref{sec:quad}  that there is no size $s+1$ determinantal representation respecting $\BG_Q$. However, 
Example \ref{Qex} shows there is a size $2s+1$ determinantal representation respecting $\BG_Q$.


\begin{proposition}\label{quadprop}
  Let $Q\in S^2\BC^{M*}$ be a nondegenerate  quadratic form, that is, a   homogeneous polynomial
   of degree 2. Then
$$
\sdc(Q)=M+1.
$$
\end{proposition}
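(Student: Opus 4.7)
Example~\ref{Qex} already provides a $\BG_Q$-equivariant determinantal representation of $Q$ of size $M+1$, so $\sdc(Q)\le M+1$.

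\textbf{Lower bound: setup.} Suppose $\tilde A=\Lambda+A\colon V\to\mathcal{M}_n(\mathbb{C})$ is any equivariant determinantal representation of $Q$ with $\dim V=M$. The plan is to translate the equivariance into a representation-theoretic constraint. Lifting $O(M)\subset\BG_Q$ through $\BG_A$ into $\BG_{\tdet_n}$ (passing to the double cover $\operatorname{Pin}(M)$ if necessary to resolve the central $\mathbb{C}^*$ obstruction) and composing with the projection to $(\GL_n\times\GL_n)/\mathbb{C}^*$ equips the two copies of $\mathbb{C}^n$ with (possibly projective) representations $U_1,U_2$ of $O(M)$, identifying $\mathcal{M}_n\simeq U_1\otimes U_2^*$ as $O(M)$-modules. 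Equivariance then amounts to: (i) an $O(M)$-equivariant injection $A\colon V\hookrightarrow U_1\otimes U_2^*$ of the standard representation; (ii) an $O(M)$-equivariant linear map $\Lambda\colon U_2\to U_1$; and (iii) the polynomial identity $\det(\Lambda+A(x))=Q(x)$.

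\textbf{Lower bound: core argument.} I would then decompose $U_1,U_2$ into $O(M)$-isotypic components and invoke Schur's lemma. The key representation-theoretic fact is that for $M\ge 3$, any pair of $O(M)$-irreducibles $V_\lambda,V_\mu$ with $V\subset V_\lambda\otimes V_\mu^*$ satisfies $\dim V_\lambda+\dim V_\mu\ge M+1$, with equality only for $(V,\operatorname{triv})$ and $(\operatorname{triv},V)$ up to sign twist; this follows from $V\otimes V=\operatorname{triv}\oplus\Lambda^2 V\oplus S^2_0V$ (which contains no copy of $V$) and dimension bounds on other Clebsch--Gordan pieces. Condition~(iii) rules out the degenerate case $\Lambda=0$: that would force $\det A(x)=Q(x)$ of degree $n$, hence $n=2$, which is incompatible with an $O(M)$-equivariant injection of the $M$-dimensional standard $V$ into $\mathcal{M}_2$ when $M\ge 3$. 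Combining this with the invariance of $\Lambda$ (via Schur, $\Lambda$ acts as a scalar on each matched isotypic component), I expect to conclude that at least one of $U_1,U_2$ must contain both a copy of the standard $V$ and a 1-dimensional summand, so $n=\dim U_i\ge M+1$.

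\textbf{Main obstacle.} The delicate point is handling the small dimensions $M\in\{3,4,5,6\}$ for which $\operatorname{Spin}(M)$ admits spinor representations of dimension $\le M$ (of sizes $2,2,4,4$ respectively). Here the Clebsch--Gordan enumeration in the core argument must be carried out directly for $\operatorname{Pin}(M)$, and one must verify by case analysis that no exotic combination of honest and spinor representations yields an equivariant representation of size $\le M$ satisfying~(i)--(iii). For $M\ge 7$ the lower bound is cleaner, since the smallest nontrivial irreducible of $\operatorname{Spin}(M)$ is $V$ itself of dimension $M$.
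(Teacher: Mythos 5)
There is a genuine gap. The paper's lower bound does not need any Clebsch--Gordan enumeration for $O(M)$ or its covers; it hinges on a normal form for $\tilde A(0)$ that you never establish. By Lemma~\ref{lem:vzg} (von zur Gathen), a determinantal representation of $Q$ is regular, so after multiplying by constant invertible matrices one may take $\tilde A(0)=\Lambda_{n-1}$. Lemma~\ref{lem:Gdl} then gives a block decomposition $\Mcc_n(\BC)=\ell_1^*\ot\ell_2\oplus\ell_1^*\ot\BH\oplus\BH^*\ot\ell_2\oplus\BH^*\ot\BH$ with $\dim\ell_1=\dim\ell_2=1$, and Malcev's theorem lets one conjugate a Levi subgroup $L$ of $\bar\rho_A^{-1}(\BG_Q)$ into $(\GL(\ell_2)\times\GL(\BH))\rtimes\BZ_2$. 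From there the proof is two lines: the $L^\circ$-equivariant projection of the irreducible module $A(V)\simeq V$ onto the $1$-dimensional block $\ell_1^*\ot\ell_2$ vanishes, while nonvanishing of $\tdet_n\circ\tilde A$ forces its projection onto $\ell_1^*\ot\BH\simeq\BH$ to be nonzero, giving $V\hookrightarrow\BH$ and $n-1\geq M$. The $1$-dimensional summand that you only ``expect to conclude'' is present by construction, and no spinor analysis ever arises.

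Without this normal form, your core step does not go through: there is no a priori reason either $U_i$ contains a trivial summand, and the Clebsch--Gordan inequality you assert is false once projective representations are admitted. For $M=4$ the half-spin representations $S^{\pm}$ of the double cover satisfy $V\subset S^+\ot(S^-)^*$ with $\dim S^++\dim S^-=4<M+1$. This is not an artifact: $\tdet_2$ is a nondegenerate quadratic form in $4$ variables with the equivariant size-$2$ determinantal representation $\tilde A=\operatorname{id}$, and $M=4$ is exactly where $\tcodim(\{Q=0\}_{sing})<5$ so Lemma~\ref{lem:vzg} no longer guarantees $\trank(\tilde A(0))=n-1$. Your anticipated case analysis over $\operatorname{Pin}(M)$ is a symptom of running the representation theory before pinning down $\Lambda$; the paper's normal-form-first strategy avoids it entirely, and it is the regularity constraint --- not a Clebsch--Gordan dimension count --- that carries the argument.
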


\subsection{Determinantal representations of the determinant}
Although it may appear strange at first, one can ask for determinantal representations of $\det_m$. In this case, to get an interesting lower
bound, we add a regularity condition motivated by Lemma \ref{lem:vzg}:

\begin{definition}
  \label{def:reg}
Let $P\in S^mV^*$. A determinantal representation $\tilde A\,:\,V\longto
\Mcc_n(\BC)$ is said to be {\it regular} if $\tilde A(0)$ has rank
$n-1$.

Call the minimal size of  a regular determinantal representation of $P$
 the {\it regular determinantal complexity of $P$} and denote it by
$\rdc(P)$. Let  $\srdc(P)$ denote the minimal size of a regular equivariant determinantal representation of $P$.
\end{definition}

 Any determinantal representation of  $\tperm_m$ or a
 smooth quadric  is regular, see \S\ref{normallambda}. 
In contrast,  the trivial   determinantal representation of  $\tdet_m$
is not regular; but this representation is equivariant so $\sdc(\tdet_m)=m$.

\begin{theorem}
  \label{th:srdcdet}
$\srdc(\tdet_m)= \binom{2m}{m}-1 \sim  4^m$. 
\end{theorem}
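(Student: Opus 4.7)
The plan is to prove the two inequalities $\srdc(\tdet_m)\leq\binom{2m}{m}-1$ and $\srdc(\tdet_m)\geq\binom{2m}{m}-1$ separately.

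For the upper bound, I would construct a regular equivariant determinantal representation of $\tdet_m$ of size $\binom{2m}{m}-1$ by modifying Proposition~\ref{permwithrespect}: replace each ``regular symmetric'' summand $(S^kE)_{reg}\otimes(S^kF^*)_{reg}$ of dimension $\binom{m}{k}^2$ by the exterior-power summand $\Lambda^kE\otimes\Lambda^kF^*$ of the same dimension. Set $\BC^n=\bigoplus_{k=0}^{m-1}\Lambda^kE\otimes\Lambda^kF^*$, fix a linear isomorphism $\Lambda^0E\otimes\Lambda^0F^*\simeq\Lambda^mE\otimes\Lambda^mF^*$, and replace the symmetric projection appearing in $S_k$ by the wedge product $(e\otimes f)\cdot(v\otimes w)=(e\wedge v)\otimes(f\wedge w)$. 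Let $\tilde A=\Lambda_0+\sum_{k=0}^{m-1}S_k$ with $\Lambda_0$ a scalar on each block. Each summand is an irreducible $\GL(E)\times\GL(F)$-module and each connecting map is $\GL\times\GL$-equivariant, so $\tilde A$ respects $\BG_{\tdet_m}$. The block-triangular determinant expansion is a sum over ``paths'' indexed by pairs $(\sigma,\tau)\in\Symgr_m\times\Symgr_m$, each contributing a monomial $x_{\sigma(1)\tau(1)}\cdots x_{\sigma(m)\tau(m)}$ weighted by $\tsgn(\sigma)\tsgn(\tau)$ coming from the two wedge rearrangements; reindexing by $\pi=\tau\sigma^{-1}$ collapses this sum to $m!\,\tdet_m(x)$, which becomes $\tdet_m$ after absorbing the factor $m!$ into $\Lambda_0$. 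Regularity is automatic since $\Lambda_0$ vanishes only on the $1$-dimensional $k=0$ block.

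For the lower bound, I would parallel the lower-bound argument in Theorem~\ref{halfsdcperm}, strengthened by the full $\GL(E)\times\GL(F)$-action rather than only $N(T^{\GL(E)})$. Let $\tilde A$ be a regular equivariant representation of $\tdet_m$ of size $n$. Decompose $\BC^n$ into $\GL(E)\times\GL(F)$-isotypic components $(S^\lambda E\otimes S^\mu F^*)^{\oplus n_{\lambda,\mu}}$. By Schur's lemma, $\Lambda$ is scalar on each component; regularity ($\rk\Lambda=n-1$) forces its $1$-dimensional kernel to sit inside the trivial isotypic component $(\lambda,\mu)=(\emptyset,\emptyset)$, which thus appears with multiplicity $\geq 1$. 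The linear part $A:E\otimes F^*\to\operatorname{End}(\BC^n)$ is $\GL\times\GL$-equivariant, so by Pieri's rule applied on both factors it can connect $(\lambda,\mu)$ to $(\lambda',\mu')$ only when $\lambda'=\lambda+\square$ and $\mu'=\mu+\square$. Expanding $\det_n(\Lambda+A(x))=\tdet_m(x)$ and using the vanishing of $\Lambda$ on the kernel, the degree-$m$ part of the expansion is produced by length-$m$ Pieri paths starting at the kernel. The character of $\det_n$ on $\BC^n$ must yield the character $\det_E\otimes\det_F^{-1}$ of $\tdet_m$, which pins down the endpoint of the path at $((1^m),(1^m))$. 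Since the only Pieri chain of Young diagrams from $\emptyset$ to $(1^m)$ is the all-column route $\emptyset\to(1)\to(1,1)\to\cdots\to(1^m)$, the summand $V_k=\Lambda^kE\otimes\Lambda^kF^*$ must appear in $\BC^n$ for every $k=0,\ldots,m-1$, yielding $n\geq\sum_{k=0}^{m-1}\binom{m}{k}^2=\binom{2m}{m}-1$.

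The main obstacle is this last rigidity claim: one must show that every chain contributing nontrivially to $\det_n\circ\tilde A$ is forced onto the exterior route, with no detours through alternative shapes $(\lambda,\mu)$ and no cancellations exploiting multiplicities $n_{\lambda,\mu}>1$ that could allow a smaller $n$. Establishing this requires carefully analyzing the block expansion of $\det_n(\Lambda+A(x))$ using the uniqueness (up to scalar) of the equivariant Pieri morphism $E\otimes F^*\hookrightarrow(\Lambda^kE\otimes\Lambda^kF^*)^*\otimes(\Lambda^{k+1}E\otimes\Lambda^{k+1}F^*)$, combined with the sign/character constraints coming from the $\tsgn$ factor of $\tdet_m$. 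This is precisely the step where the additional $\GL$-structure (as opposed to mere $N(T)$-structure) promotes the lower bound from $2^m-1$ to $\binom{2m}{m}-1$.
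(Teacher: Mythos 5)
Your two-sided plan matches the paper's approach. The upper bound is exactly Proposition~\ref{detwithrespect}; your direct combinatorial check that the block expansion produces $m!\,\tdet_m$ is correct, though the paper instead argues abstractly that $\tdet_n\circ\tilde A$ is $\GL(E)\times\GL(F)$-equivariant, hence a scalar multiple of $\tdet_m$, and then evaluates on diagonal matrices to compute the constant. For the lower bound, the doubled Pieri chain over $\GL(E)\times\GL(F)$ that you propose is the natural extension of the chain argument the paper actually writes out for $\GL(E)$ alone: note that the paper's text labeled ``Proof of Theorem~\ref{th:srdcdet}'' in \S\ref{sec:det} only establishes $n\geq 2^m-1$ under a $\GL(E)$-hypothesis (i.e., the content of Theorem~\ref{halfsdcdet}); the $\binom{2m}{m}-1$ version is left implicit as the doubled analogue, so your sketch fills in what the paper glosses. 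The obstruction you flag at the end — detours through other shapes, or cancellations among multiplicities — is resolved in the paper's chain argument by two devices. First, the central $\BC^*\subset\BC^*\times\SL(E)$ (and its $F$-analogue) acts with weight $0$ on each $\BH_i^*\ot\BH_i$ block and with nonzero weight on $E\ot F^*$, so Schur's lemma kills the diagonal and above-diagonal blocks of $A(V)$ and forces the only nonzero blocks to be $\ell_1^*\ot\BH_1$, $\BH_i^*\ot\BH_{i+1}$, and $\BH_k^*\ot\ell_2$; this rules out cycles and lets you read dimensions summand by summand. Second, if the projection of $A(V)$ onto some block $\BH_i^*\ot S_i$ vanishes, expansion along the $\BH_i$ rows and columns shows $\BH_i$ can simply be discarded (the determinant factors), so a minimal representation must extend the chain nontrivially at every step. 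Combined with Pieri (each step adds one box in each of $E$ and $F^*$) and the endpoint constraint — $\BH_k^*\ot\ell_2\simeq E\ot F^*$ with $\ell_2$ carrying a power of the determinant character — the chain is pinned to $\Lambda^kE\ot\Lambda^kF^*$ for $k=1,\dots,m-1$, giving $n\geq 1+\sum_{k=1}^{m-1}\binom{m}{k}^2=\binom{2m}{m}-1$. So your obstacle is real but surmountable by exactly the mechanism the paper uses in the half-symmetry case.
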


\medskip As in the case of the permanent, we can  get
an exponential lower bound using only  about half the symmetries of
 the determinant. 

\medskip
 \begin{theorem}
   \label{halfsdcdet}
  Let $\tilde A_m\, :\, \cM_m(\BC)\lra \cM_n(\BC)$
 be a regular determinantal representation of $\tdet_m$ that respects
 $\GL(E)$. Then $n\geq 2^m-1$.

Moreover, there exists a  regular determinantal representation of $\tdet_m$ that
respects $\GL(E)$ of size $2^m-1$.
 \end{theorem}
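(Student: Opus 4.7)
I would prove both bounds in parallel with Theorem~\ref{halfsdcperm}, substituting the irreducible $\GL(E)$-modules $\bigwedge^k E$ for the $N(T^{\GL(E)})$-modules $(S^k E)_{reg}$. Since both families share dimension $\binom{m}{k}$, the same total bound $2^m-1$ falls out.

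For the upper bound, set $\BC^n = \bigoplus_{k=0}^{m-1} \bigwedge^k E$ and fix an isomorphism $\bigwedge^0 E \simeq \bigwedge^m E$. Let $s_k \colon E \to \Hom(\bigwedge^k E, \bigwedge^{k+1} E)$, $v \mapsto v \wedge (\cdot)$, be the canonical (and essentially unique) $\GL(E)$-equivariant wedge map. With $\Lambda_0 = \sum_{k=1}^{m-1} \Id_{\bigwedge^k E}$, define
$$\tilde A = \Lambda_0 + \sum_{k=0}^{m-1} s_k \ot f_{k+1}.$$
Equivariance under $g \mapsto \bigoplus_k \bigwedge^k g \in \GL_n$ is immediate from the equivariance of each $s_k$, and $\tilde A(0) = \Lambda_0$ has rank $n-1$, so the representation is regular. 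The identity $\det_n \tilde A = (-1)^{m+1} \det_m$ follows by the same Schur-complement computation as in Proposition~\ref{grenetrep}: the $(n-1)\times(n-1)$ complementary block is block lower triangular with identity on the diagonal and so has determinant $1$, reducing everything to the composition of the $m$ wedge maps, which produces $e(x^1) \wedge \cdots \wedge e(x^m) = \det_m(x) \cdot e_1 \wedge \cdots \wedge e_m$.

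For the lower bound, let $\tilde A = \Lambda + A$ be a regular determinantal representation of $\det_m$ respecting $\GL(E)$. Lift the action to homomorphisms $\rho_1, \rho_2 \colon \GL(E) \to \GL(\BC^n)$ (defined up to a common scalar) satisfying $\rho_1(g)\Lambda = \Lambda \rho_2(g)$ and $\rho_1(g) A(x) = A(gx)\rho_2(g)$. By regularity, $K_0 := \ker \Lambda$ is $1$-dimensional and $\rho_2$-stable, hence a character $\det^p$ of $\GL(E)$. Using complete reducibility, choose a $\rho_2$-invariant complement of $K_0$ (coming from a $\rho_1$-invariant complement of the image of $\Lambda$), and let $\Lambda^\dagger$ be the partial inverse of $\Lambda$ there. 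Define inductively $K_{j+1} := K_j + \Lambda^\dagger\bigl(A(\cM_m) \cdot K_j\bigr)$, a chain of $\rho_2$-stable submodules of $\BC^n$.

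Since $\cM_m \simeq E^{\oplus m}$ as a $\GL(E)$-module, each successive quotient $K_{j+1}/K_j$ is a $\GL(E)$-subquotient of $E \ot (K_j/K_{j-1})$, hence of $E^{\ot(j+1)} \ot \det^p$. The identity $\det_n \tilde A = \det_m$ forces the filtration to close at step $m$ into a $1$-dimensional $\GL(E)$-character matching the total character of the determinant; this character sits inside $E^{\ot m}$ only through the antisymmetric isotypic component $\bigwedge^m E$. Propagating this constraint down the filtration gives $K_j/K_{j-1} \supseteq \bigwedge^j E \ot \det^p$ for each $j = 0, \ldots, m-1$, so
$$n \geq \sum_{j=0}^{m-1} \binom{m}{j} = 2^m - 1.$$
The main technical obstacle is precisely this last identification: ruling out that a larger $\GL(E)$-irreducible from $E^{\ot j}$ could appear in $K_j/K_{j-1}$ in place of $\bigwedge^j E$. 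I expect this requires a careful character/weight computation mirroring the permanent case of Theorem~\ref{halfsdcperm}, with regularity essential to anchor the bottom of the filtration at a $1$-dimensional space; without regularity the trivial size-$m$ representation $\tilde A(x) = x$ is a counterexample.
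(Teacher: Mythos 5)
Your upper-bound construction is exactly the paper's Proposition~\ref{dethalfrep}, so that half is fine. For the lower bound you propose a different organizing device: a filtration $K_0\subset K_1\subset\cdots$ built by iterating $\Lambda^\dagger A(\cdot)$ on $\ker\Lambda$, rather than the paper's chain of irreducible $\CC^*\times\SL(E)$-submodules $\BH_1,\dots,\BH_k\subset\BH$ linked by $E\subset\BH_i^*\ot\BH_{i+1}$ and $E\simeq\BH_k^*\ot\ell_2$. These are close in spirit (your graded pieces $K_{j+1}/K_j$ play the role of the $\BH_i$'s), but the gap you flag at the end --- identifying $\bigwedge^j E$ inside the graded pieces --- is precisely the entire content of the theorem, and the route you sketch for closing it does not work. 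The paper does \emph{not} propagate a constraint downward from $\bigwedge^m E$: it argues forward, noting that $\BH_1$ must contain $E$ (the first column of the block matrix must be nonzero), that by the Pieri rule each passage $\BH_i\leadsto\BH_{i+1}$ adds one box to the Young diagram, and that the terminal diagram must have shape $(c^{m-1},c-1)$ for compatibility with the one-dimensional $\ell_2$; minimizing dimension over all such chains forces $(\Lambda^1E,\dots,\Lambda^{m-1}E)$. Your assertion that the filtration ``closes at step $m$'' is unjustified --- the paper's $k$ is allowed to exceed $m-1$, and the lower bound survives because longer chains only cost more dimension --- so the ``work backward from $\bigwedge^m E$'' heuristic cannot be made to anchor the argument.

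Two further points are glossed over. First, lifting the $\GL(E)$-action to a pair $\rho_1,\rho_2:\GL(E)\to\GL(\BC^n)$ ``up to a common scalar'' is not free: the paper must take a Levi decomposition of $\bar\rho_A^{-1}(\GL(E))$, invoke Malcev's theorem to conjugate the Levi factor into $\GL(\ell_2)\times\GL(\BH)$, and pass to a finite cover to obtain an honest $\CC^*\times\SL(E)$-action (this is also why the argument is phrased for $\SL(E)$ rather than $\GL(E)$, and why $m\geq3$ is needed so that $E\not\simeq E^*$). Second, even granting the lift, showing that $K_{j+1}/K_j$ \emph{must contain} $\bigwedge^j E$ rather than merely being a subquotient of $E^{\ot j}\ot\det^p$ is where the Pieri/Young-diagram bookkeeping is unavoidable; your sketch stops just before it. Your observation that regularity is essential (the trivial $\tilde A(x)=x$ being the counterexample) is correct and matches the paper's remark.
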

 
 \begin{remark} Normally when one obtains the same lower bound for
 the determinant as the permanent in some model it is discouraging for the model.
 However here there is an important difference due to the imposition of
 regularity for the determinant. We discuss this in further  below Question \ref{rdcdet}.
 \end{remark}

\medskip
We now introduce   notation to describe the regular determinantal representation of $\tdet_m$ that
respects $\GL(E)$ of size $2^m-1$ mentioned in Theorem~\ref{halfsdcdet}.

Observe that $(S^kE)_{reg}$ is isomorphic to $\Lambda^kE$ as a
$T^{\GL(E)}$-module but not as an $\FS_m$-module. The irreducible $\FS_m$-modules are
indexed by partitions of $m$, write $[\pi]$ for the $\FS_m$-module associated to the partition $\pi$. 
Then as $\FS_m$-modules, $(S^kE)_{reg}=\oplus_{j=0}^{\tmin\{k,m-k\} } [m-j,j]$, while
$\La kE=[m-k,1^k]\op [m-k+1,1^{k-1}]$. In particular these spaces are not isomorphic as $N(T^{SL(E)})$-modules.

Write $\Mcc_m(\BC)=E \ot F^*$. Let $f_1\hd f_m$ be a basis of $F^*$.
Let $ex_k$ denote exterior multiplication in $E$:
\begin{align*} ex_k\,:\,E&\longto (\La kE)^*\ot (\La{k+1}E)\\
v &\mapsto \{ \o\mapsto v\ww \o\}.
\end{align*}

\begin{proposition}\label{dethalfrep}
The following is a   regular determinantal representation of $\tdet_m$
that respects $\GL(E)$.  
Let $\BC^n=\bigoplus_{j=0} ^{m-1}\La j E$,   so $n=2^m-1$ and 
  $\End(\BC^n)=\oplus_{0\leq i,j\leq m-1}\Hom(\La j E,\La i E)$. 
Fix an identification $\La mE\simeq \La 0 E$.
Set 
$$\Lambda_0= \sum_{k=1}^{m-1}\Id_{\La k E},
$$
and
\be\label{regdet}\tilde A=\Lambda_0 + \sum_{k=0}^{m-1} ex_k\ot f_{k+1}.
\ene
Then $\tdet_m=  \tdet_n\circ \tilde A$ if $m\equiv 1,2\tmod 4$ and $\tdet_m= -\tdet_n\circ \tilde A$ if
$m\equiv 0,3\tmod 4$.  
In bases respecting the direct sum, 
the linear part, other than the last term which lies in the upper right block,    lies just below the
diagonal blocks, and all blocks other than the upper right,  the diagonal and sub-diagonal
are zero. 
\end{proposition}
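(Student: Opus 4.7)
The plan has three parts: checking the structural claims, computing $\tdet_n(\tilde A(x))$, and verifying $\GL(E)$-equivariance. The block structure is immediate from inspection of \eqref{regdet}: the summand $ex_k\ot f_{k+1}$ sits in the $\Hom(\La k E,\La{k+1}E)$ block (with $k=m-1$ landing in the upper-right block via $\La m E\simeq\La 0 E$), and $\Lambda_0$ is the identity on the diagonal blocks for $k\geq 1$. In particular $\tilde A(0)=\Lambda_0$ has nullity $\dim\La 0 E=1$, so $\tilde A$ is regular.

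To compute $\tdet_n(\tilde A(x))$ I would expand by Leibniz over permutations $\sigma$ of a basis $\bigsqcup_k I_k$ of $\BC^n$, with $I_k$ a basis of $\La k E$. The block form forces a contributing $\sigma$ to send each $b\in I_k$ either to $b$ itself (available only for $k\geq 1$) or to an element of $I_{k+1\tmod m}$. Letting $n_k$ count the vectors of $I_k$ moved by $\sigma$, the permutation constraint gives $n_k=n_{k-1}$ for all $k$, while the vanishing of the diagonal block on $\La 0 E$ forces $n_0\geq 1$; hence $n_k\equiv 1$ and the moved vectors form an $m$-cycle $a_0\to a_1\to\cdots\to a_{m-1}\to a_0$ with $a_k\in I_k$. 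Such cycles are in bijection with complete flags $\emptyset=a_0\subset a_1\subset\cdots\subset a_m=[m]$ with $|a_k|=k$, hence with permutations $\tau\in\FS_m$. The cycle contributes sign $(-1)^{m-1}$, and summing the product of its matrix entries over flags extracts the coefficient of $e_{[m]}$ in the composition $ex_{m-1}(v_m)\circ\cdots\circ ex_0(v_1)(1)=v_m\ww v_{m-1}\ww\cdots\ww v_1=(-1)^{m(m-1)/2}\tdet_m(x)\,e_{[m]}$. Collecting signs yields $\tdet_n(\tilde A(x))=(-1)^{m-1+m(m-1)/2}\tdet_m(x)$, and checking the four residues $m\tmod 4$ reproduces the claimed $\pm 1$.

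For $\GL(E)$-equivariance, given $g\in\GL(E)$ I would set $B_g=\bigoplus_{k=0}^{m-1}\La k g$ and take $A_g$ equal to $B_g$ on the blocks with $k\geq 1$ but $A_g|_{\La 0 E}=\det(g)\cdot\Id$. The modification on $\La 0 E$ is forced because the identification $\La m E\simeq\La 0 E$ is not $\GL(E)$-equivariant with trivial scalar --- $\La m g$ acts by $\det g$ on $\La m E$. Using the intertwining $\La{k+1}(g)\circ ex_k(v)=ex_k(gv)\circ\La k(g)$, one checks block by block that $A_g\tilde A(x)B_g^{-1}=\tilde A(gx)$; this preserves $\Lambda_0$ and maps $A(V)$ into itself, so $(A_g,B_g)\in\BG_A$ and $\bar\rho_A(A_g,B_g)$ is left multiplication by $g$ on $V=E\ot F^*$. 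Hence $\GL(E)\subseteq\bar\rho_A(\BG_A)$.

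The main technical hurdle is the sign bookkeeping in the middle step: three distinct signs --- the parity of an $m$-cycle, the reversal $(-1)^{m(m-1)/2}$ between $v_m\ww\cdots\ww v_1$ and $v_1\ww\cdots\ww v_m$, and the convention in the identification $\La m E\simeq\La 0 E$ --- must simultaneously combine to the stated $\pm 1$ on all four classes $m\tmod 4$.
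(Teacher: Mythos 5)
Your proof is correct, but it takes a genuinely different route from the paper's. The paper's argument is representation-theoretic: it observes that forgetting the identification $\La m E\simeq\La 0 E$ makes $\tilde a\,:\,E\ot F^*\to \Hom(\oplus_{j=0}^{m-1}\La jE, \oplus_{i=1}^{m}\La iE)$ and then $\tDet$ into a composition of $\GL(E)$-equivariant maps landing in $\La mE$, so $\tdet_n\circ\tilde A$ is forced to be a scalar multiple of $\tdet_m$ (by the uniqueness of a degree-$m$ map $E\ot F^*\to\La m E$ transforming by $\det$, equivalently by the "determined by symmetries" principle), and the scalar is then pinned down by a single Laplace expansion at $\Id_m$ (or at diagonal matrices). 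Your version instead computes $\tdet_n(\tilde A(x))$ directly by a combinatorial analysis of the Leibniz expansion: you show the only contributing permutations are $m$-cycles threading the filtration $\La 0 E\to\cdots\to\La{m-1}E\to\La 0E$, identify these with complete flags, and recognize the resulting sum as the $e_{[m]}$-coefficient of $v_m\ww\cdots\ww v_1$. The two computations give the same sign $(-1)^{m-1+m(m-1)/2}$, which I checked does reduce to the stated pattern mod $4$. Your verification that $\GL(E)$ lies in the image of $\bar\rho_A$ is also correct but again more explicit than the paper's: you exhibit the lift $(A_g,B_g)$ with $B_g=\oplus_k\La kg$ and $A_g$ equal to $B_g$ twisted by $\det g$ on $\La 0 E$ to compensate for the non-equivariance of $\La mE\simeq\La 0E$, whereas the paper gets the same conclusion abstractly from the displayed equivariance of $\tDet\circ\tilde a$ (your $(A_g,B_g)$ is exactly the element the paper's argument produces implicitly). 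The trade-off: the paper's route is shorter and foregrounds the "characterized by symmetry" machinery that drives the rest of the article, while your route is more self-contained and makes the sign bookkeeping fully transparent; the one point you should fill in is that the sign claim does depend on fixing the identification $\La mE\simeq\La 0E$ by $e_1\ww\cdots\ww e_m\mapsto 1$, which you flag but do not nail down.
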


Note the similarity with the expression \eqref{Grenetexpr}. This will be useful
for proving the results about the determinantal representations of the permanent.

\bigskip
When $m=2$ this is
$$
\begin{pmatrix}
0& -y^2_2 &  y^2_1\\
y^1_1 & 1& 0\\
y^1_2 & 0 & 1
\end{pmatrix}
$$
agreeing with our earlier calculation of a rank four quadric. Note the minus sign in front of $y^2_2$ because
$ex(e_2)(e_1)=-e_1\ww e_2$.

For example, ordering the bases of $\La 2\BC^3$ by $e_1\ww e_2,
e_1\ww e_3, e_2\ww e_3$, the matrix for $\tdet_3$ is
$$
\begin{pmatrix}
0&0&0&0& y_3^3& -y_2^3& y_1^3\\
y^1_1 & 1&  &    & & &  \\
y^1_2 &  & 1 &    & & &  \\
y^1_3 &  &   & 1   & & &  \\
& -y^2_2 &  y^2_1 &0& 1& & \\
& -y^2_3 & 0 & y^2_1&  & 1& \\
&0  & -y^2_3 & y^2_2&  &  & 1
\end{pmatrix}.
$$

 \medskip
We   now   give a   regular equivariant determinantal representation
of $\tdet_m$.
Let $EX_k$ denote the exterior multiplication
\begin{align*} EX_k\,:\,E\ot F^*&\longto (\La kE\ot \La k F^*)^*\ot (\La{k+1}E\ot \La{k+1}F^*)\\
e\ot f &\mapsto \{ \o \ot \eta \mapsto  e\ww \o\ot f\ww \eta \},
\end{align*}

\begin{proposition}\label{detwithrespect}
The following is an  equivariant   regular determinantal representation of $\tdet_m$.
Let $\BC^n=\bigoplus_{j=0}^{m-1}\La j E\ot \La j F^*$,   so $n=\binom{2m}{m}-1 \sim  4^m $   
and  $\End(\BC^n)=\oplus_{0\leq i,j\leq m}\Hom(\La j E\ot \La j F^*,\La i E\ot \La i F^*)$. 
Fix an identification $ \La mE\ot\La mF^*\simeq \La 0 E\ot \La 0 F^*$.
Set $$\Lambda_0=  \sum_{k=1}^{m-1}\Id_{\La k E\ot \La k F^*} 
$$ and 
define
\be
\label{symdetexpr}\tilde A=(m!)^{\frac {-1}{n-m}} \Lambda_0 +
\sum_{k=0}^{m-1} EX_k. 
\ene
Then $(-1)^{m+1}\tdet_m=\tdet_n\circ \tilde A$. 
\end{proposition}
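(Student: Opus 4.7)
The plan is to verify three things: (a) the determinantal identity $(-1)^{m+1}\tdet_m = \tdet_n\circ\tilde A$, (b) regularity of $\tilde A$, and (c) surjectivity of $\bar\rho_A$ onto $\BG_{\tdet_m}$.

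For (a), I would expand $\tdet_n(\tilde A(u))$ via Leibniz. The nonzero blocks of $\tilde A$ sit in only three positions: the scaled identity $\alpha\,\Id$ (with $\alpha=(m!)^{-1/(n-m)}$) on each diagonal block $k=1,\dots,m-1$; the maps $EX_k$ on subdiagonal block $(k+1,k)$ for $k\le m-2$; and $EX_{m-1}$ in the upper-right block $(0,m-1)$ after the identification $\La m E\ot\La m F^*\simeq\La 0 E\ot\La 0 F^*$. A flow-balance count (each block must ship out as many columns as it receives from the previous block) forces every contributing permutation to be a single $m$-cycle through all blocks, parameterised by a chain $\emptyset=I_0\subset\cdots\subset I_m=[m]$ in $E$ and a parallel chain $(J_k)$ in $F^*$, equivalently a pair $(\pi_I,\pi_J)\in\Symgr_m\times\Symgr_m$. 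The product of $EX_k$ entries along the cycle collapses (the $(-1)^{\binom m 2}$ reordering factors cancel in pairs between $E$ and $F^*$) to $\tsgn(\pi_I)\tsgn(\pi_J)\prod_k x_{\pi_I(k+1),\pi_J(k+1)}$, and the substitution $\pi_J=\tau\pi_I$ yields
\[
\sum_{\pi_I,\pi_J\in\Symgr_m}\tsgn(\pi_I)\tsgn(\pi_J)\prod_{k=0}^{m-1} x_{\pi_I(k+1),\pi_J(k+1)}=m!\cdot\tdet_m(x),
\]
which, combined with the $m$-cycle signature $(-1)^{m-1}$ and the factor $\alpha^{n-m}=1/m!$ from identity contributions on the $n-m$ off-cycle positions, produces exactly $(-1)^{m+1}\tdet_m$. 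This is the $F^*$-doubled analogue of the computation underlying Proposition~\ref{dethalfrep}. Part (b) is immediate: $\tilde A(0)=\alpha\Lambda_0$ is $\alpha$ times the identity on every block other than the one-dimensional block $0$, so has rank $n-1$.

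For (c), I lift $(g,h)\in\GL(E)\times\GL(F)$ block-by-block. Write $G_k=\La k g\ot\La k h^{-T}$ for the natural $(g,h)$-action on $\La k E\ot\La k F^*$, and set $A'_k=B'_k=c_k G_k$ for $k\ge 1$ (with scalars $c_k$), while leaving $A'_0=\mu_A$ and $B'_0=\mu_B$ as two independent scalars on the one-dimensional block $0$---no $\Lambda_0$-preservation constraint links them, because $\Lambda_0$ vanishes there. The demand $A'\tilde A(u)(B')^{-1}=\tilde A(v)$ unpacks into the scalar relations $c_{k+1}/c_k=c$ for $k\ge 1$, $c_1/\mu_B=c$, and $(\mu_A/c_{m-1})\det(g)^{-1}\det(h)=c$; this last uses the twist $G_0\,EX_{m-1}(u)\,G_{m-1}^{-1}=\det(g)^{-1}\det(h)\,EX_{m-1}((g,h)\cdot u)$ introduced by the identification $\La m\simeq\La 0$. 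These equations are solvable for every $c\in\BC^*$, giving $v=c\,(g,h)\cdot u$ and character $\chi_{\tdet_n}(A',B')=\mu_A/\mu_B=c^m\det(g)\det(h)^{-1}$; the freedom in $c$ is exactly what realises the full character $\chi_{\tdet_m}(g',h')=\det(g')\det(h')^{-1}$ with $g'=cg,\ h'=h$, and as $(g,h,c)$ varies, the pairs $(g',h')$ exhaust $(\GL(E)\times\GL(F))/\BC^*$, the connected part of $\BG_{\tdet_m}$. The $\BZ_2$-transposition is handled via the perfect pairings $\La k E\ot\La{m-k}E\to\La m E\simeq\BC$ and the analogous ones for $F^*$, which induce a block-reversing isomorphism $k\leftrightarrow m-k$ of $\BC^n$ that, composed with matrix transposition, sends $\tilde A(u)$ to $\tilde A(u^T)$.

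The principal obstacle will be the bookkeeping around the identification $\La m E\ot\La m F^*\simeq\La 0 E\ot\La 0 F^*$: it is simultaneously responsible for the scaling constant $\alpha=(m!)^{-1/(n-m)}$ in the determinantal identity and for the discrepancy $\det(g)^{-1}\det(h)$ on the wrap-around block that must be absorbed into the scalars on block $0$. The flexibility of having $\mu_A$ and $\mu_B$ independent---available precisely because block $0$ is one-dimensional and lies outside the support of $\Lambda_0$---is what allows $\bar\rho_A$ to hit the full character group rather than only the trivial-character subgroup $\{\det(g)=\det(h)\}$.
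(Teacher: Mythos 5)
Your proposal is correct in substance, but takes a genuinely different route from the paper on the two substantive points: the determinantal identity and the lifting of the $\BZ_2$-symmetry.

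For the identity $(-1)^{m+1}\tdet_m=\tdet_n\circ\tilde A$, you argue directly: a flow-balance count on the block-column-to-block-row structure of $\tilde A$ forces $t_0=\dots=t_{m-1}=1$, so the only contributing permutations in the Leibniz expansion of $\tdet_n$ are products of a single $m$-cycle (threading one position in each block) with the identity off the thread, and you then track the wedge-reordering signs, the $m$-cycle sign $(-1)^{m-1}$, and the factor $\alpha^{n-m}=1/m!$ to arrive at the answer. The paper instead argues conceptually: it first exhibits $\tDet\circ\tilde a$ as a $\GL(E)\times\GL(F)$-equivariant polynomial map $E\ot F^*\to\Lambda^mE\ot\Lambda^mF^*$, which shows $\GL(E)\times\GL(F)\subseteq\operatorname{im}\bar\rho_A$, concludes by the characterization-by-symmetries argument that $P=\tdet_n\circ\tilde A$ must be a scalar multiple of $\tdet_m$, and then pins down the scalar by specializing to diagonal matrices (where the restricted block matrix reduces to Grenet's formula, giving $m!\,y^1_1\cdots y^m_m$, canceled by $\alpha^{n-m}$). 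Your approach is more self-contained and makes the role of the normalization $\alpha$ transparent, but demands careful sign bookkeeping that the paper avoids; the paper's approach is shorter and reuses the $\tDet$-equivariance to simultaneously establish both the identity and the connected part of equivariance.

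For the $\BZ_2$, you propose to lift $u\mapsto u^T$ by composing the $\Mcc_n$-transposition with a block-reversing isomorphism $k\leftrightarrow m-k$ coming from the perfect pairings $\Lambda^kE\ot\Lambda^{m-k}E\to\Lambda^mE$ and their $F^*$-analogues. This can be made to work, but the paper does something cleaner: it never uses the $\transp$ of $\Mcc_n$ at all. Instead it observes that the vertical map $\Lambda^jE\ot\Lambda^jF^*\to\Lambda^jF^*\ot\Lambda^jE$ (swapping tensor factors within each block, keeping the block index $j$ fixed) is a permutation of bases, giving permutation matrices $B_1,B_2\in\GL_n$ with $\tilde A(M^T)=B_1\tilde A(M)B_2^{-1}$; thus $\transp\in\BG_{\tdet_m}$ lifts into $\BG_{\tdet_n}^\circ$ rather than via the $\BZ_2$-part. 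Worth noting: your block-by-block lift of $(g,h,c)$, with independent scalars $\mu_A,\mu_B$ on the one-dimensional block $0$ where $\Lambda_0$ vanishes, is exactly the right mechanism for realizing the full character group, and is more explicit than the paper's treatment.
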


Comparing Theorems~\ref{mainthm} and \ref{th:srdcdet},
Theorems~\ref{halfsdcperm} and \ref{halfsdcdet},
Propositions~\ref{grenetrep} and \ref{dethalfrep} and Propositions~\ref{permwithrespect} and
\ref{detwithrespect}, one can see that  $\tdet_m$ and $\tperm_m$ have the
same behavior relatively to equivariant regular determinantal
representations.
This prompts the   question:
What is the regular determinantal complexity of the determinant? In particular:

\begin{question}
\label{rdcdet}
   Let $\rdc(\tdet_m)$ be the smallest value of $n$  such that there exist    affine linear maps
$\tilde A_m: \BC^{m^2}\ra \BC^{n^2}$ such that
\be\label{detrep}
\tdet_m=\tdet_{n}\circ \tilde A_m\mbox{ and }\trank{\tilde A(0)}=n-1.
\ene
What is the growth of $\rdc(\tdet_m)$?
\end{question}

Because of the symmetries of $\tdet_n$, this question might
be easier than determining the growth of $\dc(\tperm_m)$. If $\rdc(\tdet_m)$ grows faster than a polynomial, this would
provide a path to proving  Valiant's conjecture by  trying to transport
the bound to the permanent
via  the Howe-Young duality functor \cite{MR2308168},  which
guided the proofs of the permanental cases in this article
and enabled the computation of the linear strand of the minimal free
resolution of the ideal generated by subpermanents in \cite{2015arXiv150405171E}.
Even if this fails, it would still   refocus research towards the large singular locus
of $\{\tdet_n=0\}$. If $\rdc(\tdet_m)$ grows polynomially, then
one could still try to transport the bound to the permanent. If one
is unable to do so,  the breakdown of the method could give better insight to the difference between
$\tdet_m$ and $\tperm_m$. Polynomial growth  would 
also give a negative answer to the analog of Question \ref{symq} for the
determinant.

\subsection{Overview}
In \S\ref{prelimsect} we establish basic facts about determinantal representations and review
results about algebraic groups. The proofs of the results are then presented in increasing order
of difficulty, beginning with the easy case of quadrics in \S\ref{sec:quad}, then the case
of the determinant  in \S\ref{sec:det}, and concluding with the permanent in \S\ref{sec:perm}.

\subsection{Acknowledgments} The seed  for this article was planted during the 
Fall 2014 semester   program {\it Algorithms and Complexity in Algebraic Geometry}  at the 
Simons Institute for the Theory of Computing, UC Berkeley. Most of the work was done when the
Landsberg was a guest of   Ressayre  and Pascal Koiran. Landsberg thanks his hosts as well as  U. Lyon and ENS Lyon for
their hospitality and support. 
We thank:  Christian Ikenmeyer for useful discussions about equivariant determinantal presentations of the permanent
and Grenet's algorithm, Josh Grochow, Christian Ikenmeyer  and Shrawan Kumar for useful suggestions for improving the exposition,  J\'er\^ome Germoni for mentioning the
existence of the spin symmetric group,  Michael Forbes for suggesting Example~\ref{optimalwaring}, and Avi Wigderson for
suggesting Example~\ref{aviex}.

\section{Preliminaries on symmetries}\label{prelimsect}

Throughout this section $P\in S^mV^*$ is a polynomial and 
$$\tilde
A=\Lambda + A : V\longto \Mcc_n(\BC)
$$ is
a determinantal representation of $P$. 

\subsection{Notation} For an affine algebraic group  $G$, 
$G^\circ$ denotes the connected component of the identity.  The 
homomorphisms from $G$ to $\CC^*$ are called {\it characters}  of $G$. 
They form an abelian group denoted by $X(G)$. The law in $X(G)$ is
denoted additively  (even if it comes from multiplication in
$\CC^*$). If $G\simeq (\CC^*)^{\times r}$ is a torus then $X(G)\simeq \ZZ^r$. 

 For a vector space  $V$, $\BP V$ is the associated
projective space.
 For  a polynomial $P$  on  $V$,   $\{P=0\}=\{x\in V\,:\,P(x)=0\}$ denotes its zero set, which is an 
affine algebraic variety. For $v\in V$, the differential of $P$ at $v$ is denoted  $d_vP\in V^*$
  and   $\{P=0\}_{sing}=\{x\in
V\,:\, P(x)=0\mbox{ and } d_xP=0\}$ denotes the
singular locus of $\{P=0\}$. Note  that we do not consider the
reduced algebraic variety, in particular if $P$ is a square $\{P=0\}_{sing}=\{P=0\}$.
 
\subsection{Representations of $\BG_A$}\label{rhoadef}
The following observation plays a key
role: 

\begin{lemma}\label{containlem}
    $\bar\rho_A(\BG_A)\subset \BG_P$. Moreover, for any $g\in \BG_A$,
$\chi_{\tdet_n}(g)=\chi_P(\bar\rho_A(g))$.
\end{lemma}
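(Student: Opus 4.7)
The plan is to unwind definitions and chase a diagram. Fix $g\in \BG_A$ and set $h:=\bar\rho_A(g)\in \GL(V)$. The definition of $\bar\rho_A$ is equivalent to the intertwining identity
\[
A\circ h \;=\; g\cdot A \quad \text{as maps } V\longto A(V)\subset \Mcc_n(\BC),
\]
where $g$ acts on $\Mcc_n(\BC)$ through its embedding in $\BG_{\det_n}$. Equivalently, replacing $v$ by $h^{-1}y$, we get $A(h^{-1}y)=g^{-1}\cdot A(y)$ for every $y\in V$.

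The first key step is to upgrade this from $A$ to the full affine map $\tilde A=\Lambda+A$. The action of any element of $\BG_{\det_n}$ on $\Mcc_n(\BC)$ is linear (it is a composition of left/right multiplication by invertible matrices and possibly transposition), so it distributes over sums. Combined with $g\cdot \Lambda=\Lambda$, which is built into the definition of $\BG_A$, this gives
\[
\tilde A(h^{-1}y)\;=\;g^{-1}\cdot \Lambda \;+\; g^{-1}\cdot A(y)\;=\;g^{-1}\cdot \tilde A(y).
\]

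The second step is to apply $\det_n$ and use the character identity $\det_n(g^{-1}\cdot Z)=\chi_{\det_n}(g)\,\det_n(Z)$ valid for all $Z\in\Mcc_n(\BC)$. Since $P=\det_n\circ\tilde A$, we obtain
\[
P(h^{-1}y)\;=\;\det_n\!\bigl(g^{-1}\cdot \tilde A(y)\bigr)\;=\;\chi_{\det_n}(g)\,P(y).
\]
This equality, valid for all $y\in V$, simultaneously shows that $h=\bar\rho_A(g)$ lies in $\BG_P$ (since $P\circ h^{-1}$ is a nonzero scalar multiple of $P$) and, by comparison with the defining identity $P(h^{-1}y)=\chi_P(h)P(y)$, that $\chi_P(\bar\rho_A(g))=\chi_{\det_n}(g)$.

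There is no real obstacle here; the statement is essentially bookkeeping on the definitions. The only mild subtlety is to keep track of which action is being used on $\Mcc_n(\BC)$, namely, to observe that linearity of the $\BG_{\det_n}$-action is what allows us to pass from the identity $A\circ h=g\circ A$ on the linear piece to the corresponding identity on the affine map $\tilde A$, using that $\Lambda$ is fixed by $g$.
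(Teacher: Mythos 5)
Your proof is correct and follows essentially the same route as the paper's: you derive the intertwining identity $A\circ h = g\cdot A$ from the definition of $\bar\rho_A$, extend it to $\tilde A$ using linearity of the $\BG_{\tdet_n}$-action together with $g\cdot\Lambda=\Lambda$, then apply $\tdet_n$ and invoke its character. The paper compresses these steps into a single chain of equalities, but the argument is the same.
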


\begin{proof}
 Let $g\in \BG_A$ and $v\in V$. Then
%
\begin{align*}
(\bar\rho_A(g)P)(v)&= P(\bar\rho_A(g)^{-1}v)\\
&=\tdet_n(\Lambda + A(A^{-1}g^{-1}A(v)))\\
&=\tdet_n(g^{-1}(\Lambda + A(v))\\
&=(g\tdet_n)(\tilde A(v))\\
&= \chi_{\tdet_n}(g)\tdet_n(\tilde A(v))\\
&=\chi_{\tdet_n}(g) P(v).
\end{align*}
The lemma follows.
\end{proof}

\subsection{Normal form for $\Lambda$}\label{normallambda}

\begin{lemma}\label{lem:vzg}\cite{MR910987} Let $P\in S^mV^*$ be a polynomial.  If $\tcodim(\{P=0\}_{sing}, V)\geq 5$, then
any determinantal
representation $\tilde A$  of $P$ is regular.
\end{lemma}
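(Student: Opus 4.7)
The plan is to argue by contrapositive: assume $\tilde A$ is not regular, so $\rk(\Lambda) \leq n-2$ (the alternative being $\rk(\Lambda) = n-1$, since $\det(\Lambda) = P(0) = 0$ forces $\rk(\Lambda) \leq n-1$ automatically). My goal is to produce a subvariety of $\{P=0\}_{sing}$ of codimension at most $4$ in $V$, contradicting the hypothesis.

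The right object to consider is $Z := \tilde A^{-1}(X_2)$, where $X_2 \subset \Mcc_n(\BC)$ is the classical determinantal variety of matrices of rank at most $n-2$, which has codimension exactly $4$. I would first verify the containment $Z \subset \{P=0\}_{sing}$: at a point $v \in Z$ every $(n-1) \times (n-1)$ minor of $\tilde A(v)$ vanishes, and since those minors are (up to sign) the partial derivatives of $\det$ at $\tilde A(v)$, the chain rule gives both $P(v) = \det(\tilde A(v)) = 0$ and $d_v P = d_{\tilde A(v)}\det \circ A = 0$, so $v \in \{P=0\}_{sing}$.

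Next I would bound the codimension of $Z$ in $V$ by $4$. Since $\Lambda = \tilde A(0)$ lies in $X_2 \cap \tilde A(V)$ by assumption, this intersection is nonempty, and the standard intersection dimension inequality inside the smooth ambient $\Mcc_n(\BC)$ yields $\dim(X_2 \cap \tilde A(V)) \geq \dim \tilde A(V) - 4$ at $\Lambda$. Pulling back along the affine surjection $V \twoheadrightarrow \tilde A(V)$ preserves the codimension, giving $\operatorname{codim}(Z,V) \leq 4$. Combining with the first step yields $\operatorname{codim}(\{P=0\}_{sing},V) \leq 4$, the desired contradiction.

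The main (mild) obstacle is the codimension estimate at $\Lambda$: one must know that the intersection dimension inequality genuinely applies there, which it does because $X_2$ is a Cohen--Macaulay subvariety of $\Mcc_n(\BC)$ of the expected codimension $4$, so its local defining ideal is generated near any point by a regular sequence of length $4$, and Krull's height theorem then controls the codimension of the intersection with any subvariety through that point. The other two steps are routine.
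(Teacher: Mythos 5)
Your proof is the contrapositive form of the paper's argument, and the skeleton is the same: the chain rule shows $\tilde A^{-1}(X_2)\subset\{P=0\}_{sing}$, where $X_2$ is the locus of rank $\leq n-2$, which has codimension $4$; and if this preimage is nonempty its codimension is at most $4$, contradicting the hypothesis. So the route is correct and matches the paper.

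There is, however, a genuine flaw in your justification of the codimension estimate. You claim that because $X_2$ is Cohen--Macaulay of codimension $4$, its defining ideal is locally generated by a regular sequence of length $4$. That is the local complete intersection property, which Cohen--Macaulayness does \emph{not} imply. Indeed, $X_2$ fails to be a local complete intersection exactly where you need the statement: at a point $\Lambda$ of rank $\leq n-3$, the ideal of $(n-1)\times(n-1)$ minors requires far more than $4$ local generators. (At a point of rank exactly $n-2$, $X_2$ is smooth and your claim does hold, but you cannot assume $\Lambda$ is such a point.) Fortunately your conclusion survives for a more elementary reason: the affine intersection dimension inequality --- every irreducible component of $X\cap Y$ in $\BC^N$ has dimension $\geq\dim X+\dim Y-N$ --- holds for arbitrary irreducible subvarieties, with no Cohen--Macaulay or l.c.i.\ hypothesis, via the diagonal trick and Krull's principal ideal theorem. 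Applied with $X=X_2$ and $Y=\tilde A(V)$ an affine subspace, and pulled back along the (injective, WLOG) affine map $\tilde A$, this gives $\operatorname{codim}(Z,V)\leq 4$ directly. Replacing your Cohen--Macaulay paragraph with this standard statement closes the gap; everything else is fine.
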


\begin{proof}
Consider the affine variety $\Dc_n=\{\det_n=0\}$.
The singular locus of $\Dc_n$ is the set of 
matrices of rank at most $n-2$, and hence has codimension 4 in
$\Mcc_n(\BC)$.

For any $v\in V$, we have $d_vP=d_v(\tdet_n\circ \tilde A)=d _{\tilde A(v)}(\tdet_n)\circ A$. 
In particular, if $\tilde A(v)\in (\Dc_n)_{sing}$ then $v\in \{ P=0\}_{sing}$.

But the set of $v$ such that $\rk(\tilde A(v))\leq n-2$ is either
empty, or its codimension is at most 4. The assumption of the lemma
implies that $\rk(\tilde A(v))\geq n-1$ for any $v\in V$.
 In particular $\rk(\tilde A(0))=n-1$.
\end{proof}

\smallskip

In \cite{MR910987}, von zur Gathen showed that  $\tcodim (\{\tperm_{m}=0\}_{sing}, \BC^{m^2})\geq~5$.

 \smallskip

Let $\Lambda_{n-1} \in\Mcc_n(\CC)$ be the matrix with 1 in  the $n-1$
last diagonal entries and $0$ elsewhere. 
Any   determinantal representation $\tilde A$ of $P$ of size $n$ 
with $\trank(\tilde A(0))=n-1$  can be transformed (by
multiplying on the left and  
right by constant invertible matrices) to a determinantal representation of $P$ satisfying
$\tilde A(0)=\Lambda_{n-1} $.

\subsection{An auxiliary symmetry group}\label{auxgroup}
Recall the group $\BG_{\tdet_n}\subset \GL_{n^2}$ from Equation \eqref{gdetneqn}.
The following group plays a central role in the study of regular equivariant
  determinantal representations:
$$
\BG_{\tdet_n,\Lambda_{n-1}}= \{ g\in \BG_{\tdet_n}\mid g\cdot
\Lambda_{n-1}=\Lambda_{n-1}\}.
$$
Let  $\BH\subset\BC^n$ denote the image of $\Lambda_{n-1}$ and  
$\ell_1\in\BC^n$ its kernel. Write $\ell_2$ for $\ell_1$ in the target
$\BC^n$. 
Then $\Mcc_{n}(\BC)=(\ell_1\op \BH)^*\ot (\ell_2\op \BH)$.   
 Let $\transp\in\GL(\Mcc_n(\BC))$ denote
the transpose.\\

\begin{lemma}\label{lem:Gdl} The group $\BG_{\tdet_n,\Lambda_{n-1}}$ is 
$$
\{M\mapsto
\begin{pmatrix} \l_2 & 0\\ v_2& g\end{pmatrix}
M \begin{pmatrix} 1 & \phi_1\\ 0& g\end{pmatrix}^{-1}
\mid g\in\GL(\BH), v_2\in \BH, \phi_1\in \BH^*,\l_2\in \BC^*\}\cdot
\langle\transp\rangle.
$$
In particular, it is isomorphic to  
$$
[\GL(\ell_2)\times \GL(\BH)\ltimes (\BH\op \BH^*\otimes\ell_2)]\rtimes
\BZ_2.
$$
\end{lemma}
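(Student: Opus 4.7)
The plan is to work in the block decompositions $\BC^n = \ell_1 \oplus \BH$ (source) and $\BC^n = \ell_2 \oplus \BH$ (target), in which $\Lambda_{n-1}$ is the projection $(x,h) \mapsto (0,h)$, i.e.\ the block matrix $\begin{pmatrix} 0 & 0 \\ 0 & \Id_\BH \end{pmatrix}$.

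First I would handle the identity component. An element of $(\GL_n \times \GL_n)/\BC^*$ acts by $M \mapsto A M B^{-1}$, so stabilizing $\Lambda_{n-1}$ amounts to $A \Lambda_{n-1} = \Lambda_{n-1} B$. Writing
$$
A = \begin{pmatrix} \l_2 & \phi_2 \\ v_2 & A_{22} \end{pmatrix}, \qquad
B = \begin{pmatrix} \mu_1 & \phi_1 \\ v_1 & B_{22} \end{pmatrix},
$$
and expanding both sides in blocks immediately forces $\phi_2 = 0$, $v_1 = 0$, and $A_{22} = B_{22} =: g \in \GL(\BH)$; the residual $\BC^*$-quotient then permits the normalization $\mu_1 = 1$, yielding exactly the parametric description in the statement. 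Since $\Lambda_{n-1}$ is symmetric, $\transp$ also preserves it and lies outside the identity component of $\BG_{\tdet_n}$, producing the claimed $\BZ_2$-factor.

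To extract the abstract isomorphism type, I would multiply two normalized pairs and read off the group law: the Levi factor $\GL(\ell_2) \times \GL(\BH)$ multiplies block-diagonally, while the off-diagonal parameters $(v_2, \phi_1)$ combine additively with a twist by the Levi (on $v_2$ by $(\l_2^{-1}, g)$ on the left, on $\phi_1$ by $g^{-1}$ on the right). Interpreting $v_2$ and $\phi_1$ invariantly as elements of $\Hom(\ell_2, \BH)$ and $\Hom(\BH, \ell_1)$, and using the identification $\ell_1 \cong \ell_2$ together with the $\mu_1 = 1$ normalization, the unipotent radical takes the form $\BH \op \BH^* \otimes \ell_2$, giving the semidirect product structure asserted. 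The main bookkeeping point will be correctly tracking which of $\ell_1, \ell_2$ is trivialized by the $\BC^*$-normalization and reconciling this with the $\ell_2$-twist on the second factor of the unipotent radical.
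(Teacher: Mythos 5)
Your proposal is correct and follows essentially the same route as the paper's proof: reduce to the identity component (noting $\transp$ fixes the symmetric matrix $\Lambda_{n-1}$), write $A\Lambda_{n-1} = \Lambda_{n-1}B$ in the block decomposition with respect to $\ell_1\oplus\BH$ and $\ell_2\oplus\BH$, deduce the vanishing blocks and $A_{22}=B_{22}$, and use the $\BC^*$-quotient to normalize the $(1,1)$-entry of $B$. The only cosmetic difference is that the paper obtains the zero blocks by observing that $A$ must stabilize the image and $B$ the kernel of $\Lambda_{n-1}$, whereas you read them off by direct block multiplication — these are equivalent in content and effort; the paper also leaves the ``in particular'' isomorphism implicit, so your extra remarks on the group law and the bookkeeping of $\ell_1$ versus $\ell_2$ supply detail the paper omits.
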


\begin{proof}
First note that $\transp(\Lambda_{n-1})=\Lambda_{n-1}$, so it is sufficient to
determine the stabilizer of $\Lambda_{n-1}$ in $\BG_{\det_n}^\circ$. 
Let $A,B\in \GL_n(\CC)$ such that $A\Lambda_{n-1} B^{-1}=\Lambda_{n-1}$.
Since $A$ stabilizes the image of $\Lambda_{n-1}$ and $B$ stabilizes the
Kernel of $\Lambda_{n-1}$ we have
$$
A=\begin{pmatrix} \l_2 & 0\\ v_2& g_2\end{pmatrix}\qquad{\rm and}\qquad
B=\begin{pmatrix} \l_1 & \phi_1\\ 0& g_1\end{pmatrix},
$$
for some $\l_1\in \GL(\ell_1)$,  $\l_2\in \GL(\ell_2)$,
$g_1,\,g_2\in\GL(\BH)$, $v_2\in \BH$ and $\phi_1\in \BH^*$.
The identity $A\Lambda_{n-1} B^{-1}=\Lambda_{n-1}$ is now
equivalent to $g_1=g_2$.
Multiplying $A$ and $B$ by $\lambda_1^{-1}\Id_n$ gives the result.
\end{proof}

\subsection{Facts about complex algebraic groups}\label{alggpfacts}
Let $G$ be an affine complex algebraic group. The group $G$ is 
\begin{itemize}
\item {\it reductive} if
every $G$-module may be decomposed into a direct sum of irreducible $G$-modules.
\item {\it unipotent} if it is isomorphic to a subgroup of the group
  $U_n$ of  upper triangular matrices with 1's on the
  diagonal. 
\end{itemize}
Given a complex algebraic group $G$, there exists a 
maximal normal unipotent subgroup $R^u(G)$, called the {\it unipotent
  radical}. 
The quotient $G/R^u(G)$ is reductive. Moreover there exists subgroups
$L$ in $G$ such that $G=R^u(G)L$. In particular such $L$ are reductive. Such
a subgroup $L$ is not unique, but any two such are conjugate  in $G$ (in fact 
  by an element of $R^u(G)$).
Such a subgroup $L$ is called a {\it Levi factor of $G$}. A good
reference is \cite[Thm. 4. Chap.~6]{MR1064110}.

Malcev's theorem  (see, e.g.,  \cite[Thm.~5. Chap.~6]{MR1064110})
states that fixing a Levi subgroup $L\subset G$ and given any reductive subgroup $H$ of 
$G$, there exists $g\in R^u(G)$ such that $gHg\inv \subseteq L$.

For example, when $G$ is a parabolic subgroup, e.g.
$G=\begin{pmatrix} * & * \\ 0&*\end{pmatrix}$, we have
$L=\begin{pmatrix} * & 0 \\ 0&*\end{pmatrix}$ and
$R^u(G)= \begin{pmatrix} \Id_a & * \\ 0&\Id_b\end{pmatrix}$.

\medskip

A more important example for us is
$R^u(\BG_{\tdet_n,\Lambda_{n-1} })=(\BH\op \BH^*\ot \ell_2)$ 
and a Levi subgroup is $L= (\GL(\ell_2)\times \GL(\BH))\rtimes \BZ_2$. 

\subsection{Outline of the proofs of lower
  bounds}\label{outlinesubsect}   
Let $P\in S^mV^*$ be either a quadric, a permanent or a determinant. 
Say a regular representation $\tilde A$ respects some $G\subseteq \BG_P$.
We may assume that $\tilde A(0)=\Lambda_{n-1}$.

The first step consists in lifting $G$ to  $\BG_A$. 
More precisely, in each case  we construct  a reductive subgroup $\tilde
G$ of $\BG_A$ such that $\bar\rho_A : \tilde G\longto G$ is finite and
surjective. 
In a first reading, it is relatively harmless to  assume that $\tilde G\simeq G$. 
Then, using Malcev's theorem, after possibly  conjugating $\tilde A$, we may
assume that $\tilde G$ is contained in $(\GL(\ell_2)\times
\GL(\BH))\rtimes \BZ_2$.
Up to considering an index two subgroup of $\tilde G$ if necessary, we
assume that $\tilde G$ is contained in $\GL(\ell_2)\times
\GL(\BH)$ (with the notation of Lemma~\ref{lem:Gdl}).

Now, both $\Mcc_n(\BC)= (\ell_1\op \BH)^*\ot (\ell_2\op \BH)$ and 
$V$ (via $\bar\rho_A$) are  $\tilde G$-modules. Moreover, $A$ is an
equivariant embedding of $V$ in  $\Mcc_n(\BC)$. This turns out to
be a very restrictive condition. 

Write 
$$
\cM_n(\BC)= \begin{pmatrix} \ell_1^*\ot \ell_2 & \BH^*\ot \ell_2\\ 
\ell_1^*\ot \BH & \BH^*\ot \BH\end{pmatrix}, \ \ \
\Lambda_{n-1}= \begin{pmatrix} 0 & 0\\ 0&\Id_\BH\end{pmatrix}.
$$
If $m\geq 2$ the  $\ell_1^*\ot \ell_2$ coefficient  of
$\tilde A$ has to be zero. 
Then, since $P\neq 0$, the projection of $A(V)$ on  $\ell_1^*\ot
\BH\simeq \BH$ has to be non-zero. We thus have
a $G$-submodule $\BH_1\subset \BH$ isomorphic to an irreducible submodule of
$V$. 
A similar argument  shows that there
must be another irreducible $G$-submodule $\BH_2\subset \BH$ such that
an irreducible submodule of $V$ appears in $\BH_1^*\ot \BH_2$.

In each case, we can construct a sequence of irreducible sub-$\tilde
G$-modules $\BH_k$ of $\BH$ satisfying very restrictive
conditions. This allows us to get our lower bounds.

\bigskip

To prove the representations $\tilde A$ actually
compute the polynomials we want, in the case $G=\BG_P$, we first check
that $\BG_P$ is contained in the image of $\bar\rho_A$. Since $P$ is
characterized by its symmetries, we  deduce that $\tdet_n\circ\tilde
A$ is a scalar multiple of $P$. We then specialize to
evaluating on the diagonal matrices  in $\cM_m(\BC)$  to determine this constant, proving in particular that it
is non-zero.



\section{Symmetric determinantal representations of  quadrics}
\label{sec:quad}

 We continue the notation of \S\ref{quadricsres}, in particular $Q\in S^2\BC^{2s}$ is
 a nondegnerate quadric.

Let $\tau\,:\BC^*\times O(2s)\longto GL(2s)$  be given by $ (\lambda,M)\mapsto
\lambda M$. The image of $\tau$ is the group $\BG_Q$and 
$\chi_Q\circ\tau(\lambda,M)=\lambda^{-2}$.

In the expression \eqref{spone} we have  $\BG_{A}= (\GL(\ell_2)\times
\GL(\BH))\rtimes \ZZ/2\ZZ$.
Since  $\rho_A(\BG_A^\circ)$ is a proper subgroup of $\BG_Q$, this determinantal representation is not equivariant.

More generally, let $\tilde B$ be any determinantal representation  of $Q$
of size $s+1$. By Lemma~\ref{lem:vzg} $\trank(\Lambda)=s$. Then the dimension of $\BG_B$ cannot exceed that of
$\BG_{\det_{s+1},\Lambda_s}$, which  is $(s+1)^2$ by Lemma~\ref{lem:Gdl}. 
But the dimension of $\BG_Q$ is $2s^2-s+1$. Hence the representation
cannot respect  the symmetries. (This argument has to be refined
when  $s=3$, observing that the unipotent radical of
$\BG_{\det_{s+1},\Lambda_s}$ is contained in the kernel of $\rho_A$.)

 Nonetheless, in the case of quadrics, the smallest presentation $A$  respects
about \lq\lq half\rq\rq\ the symmetry, as was the case in Example~\ref{optimalwaring}. We will see this again with
$\tperm_3$ but so far have no explanation.

\begin{proof}[Proof of Proposition~\ref{quadprop}]
Let $\tilde A=\Lambda + A$ be a equivariant determinantal representation
of $Q$. 
By Lemma~\ref{lem:vzg},   one may assume that
$\tilde A(0)=\Lambda_{n-1}$.  

We   now construct  an analog $L$  of the group $\tilde G$ mentioned in
\S\ref{outlinesubsect}.
Start with $H=\bar\rho_A\inv(\BG_Q)$. 
Consider a Levi decomposition $H=R^u(H)L$. 
Then $\bar\rho_A(R^u(H))$ is a normal unipotent subgroup of $\BG_Q$. 
Since $\BG_Q$ is reductive this implies that $R^u(H)$ is contained in
the kernel of $\bar\rho_A$. In particular, $\bar\rho_A(L)=\BG_Q$.
Since $\BG_Q$ is connected, $\bar\rho_A(L^\circ)=\BG_Q$.

By construction  $L$ is a reductive subgroup of $\BG_{\tdet_n,\Lambda_{n-1}}$.
 By Malcev's theorem, possibly after conjugating $\tilde A$, we may and will
 assume that $L$ is contained in  $ (  \GL(\ell_2)\times
 \GL(\BH) )\rtimes \BZ_2$. In particular $L^\circ$ is contained in 
$ \GL(\ell_2)\times \GL(\BH) $
and  $\ol{\rho}_A(L^\circ)=\BG_Q$.

Observe that $A(V)$ is  an irreducible   $L^\circ$-submodule of $\Mcc_n(\BC)$ isomorphic to
$V$. Moreover, the action of $L^\circ$  respects the decomposition
$$
\Mcc_n(\BC)=\ell_1^*\ot\ell_2\oplus \ell_1^*\ot \BH\oplus
\BH^*\ot\ell_2\oplus \BH^*\ot \BH.
$$
The projection of $A(V)$ on $\ell_1^*\ot\ell_2$ has to be zero, since
it is $L^\circ$-equivariant. Hence
$$
A(V)\subset \ell_1^*\ot \BH\oplus
\BH^*\ot\ell_2\oplus \BH^*\ot \BH.
$$
In matrices:
$$
A(V)\subset \begin{pmatrix} 0 & \BH^*\ot \ell_2\\ \ell_1^*\ot \BH & \BH^*\ot \BH\end{pmatrix}.
$$
  Thus for the determinant
to be non-zero, we need the projection to $\ell_1^*\ot \BH$ to be non-zero. Thus
it must contain at least one copy of $V$. In particular
$
\dim(\BH)\geq\dim(V)
$;   the desired inequality.
\end{proof}


\section{Proofs of the determinantal representations of the determinant} 
\label{sec:det}
Recall our notations that $E,F$ are complex vector  spaces of dimension $m$
and we have an identification $E\otimes F^*\simeq \Hom(F,E)$.
A linear map $u: F\ra E$ induces linear maps
\begin{align}
\label{uwk} u^{\ww k} : \La k F&\ra \La k E\\
\nonumber v_1\ww\cdots \ww v_k&\mapsto u(v_1)\ww\cdots \ww u(v_k).
\end{align}
In   the case $k=m$, $u^{\ww m}$ is called the determinant of $u$ and we denote it
$\tDet(u)\in \Lambda^mF^*\ot \Lambda^mE$. 
 The map 
\begin{align*}
E\otimes F^*=\Hom(F,E)&\longto \Lambda^mF^*\ot \Lambda^mE\\
  u&\longmapsto \tDet(u) 
\end{align*}
is polynomial, homogeneous of degree $m$,  and equivariant for the
natural action  of $\GL(E)\times \GL(F)$.                                                                                                                                                                                                                                            

 The transpose of $u$ is 
 \begin{align*} u^T : E^*&\longto
 F^*,\\
 \varphi&\longmapsto\varphi\circ u.
 \end{align*}
 Hence $u^T\in F^*\otimes E$ is
 obtained from $u$ by switching $E$ and $F^*$,
 and 
$\tDet(u^T)\in \Lambda^mE\ot \Lambda^mF^*$.  Moreover,
$\tDet(u^T)=\tDet(u)^T$.

 \begin{proof}[Proof of Proposition~\ref{dethalfrep}]
Set $P=\tdet_n\circ\tilde A$.
To analyze the action of $\GL(E)$ on $\tilde A$,   reinterpret
$\BC^{n*}\ot \BC^n$ without the identification $\Lambda^0E\simeq\Lambda^mE$
as $(\oplus_{j=0}^{m-1}\Lambda^jE)^*\ot ( \oplus_{i=1}^{m}\Lambda^iE)$.

For each $u\in E\ot F^*$,  associate to $\tilde A(u)$  a linear map $\tilde a(u) : \oplus_{j=0}^{m-1}\Lambda^jE\ra \oplus_{i=1}^{m}\Lambda^iE$.
Then $\tDet(\tilde a(u))\in \Lambda^{n}(\oplus_{j=0}^{m-1}\Lambda^jE^*)\ot
\Lambda^{n}(\oplus_{i=1}^{m}\Lambda^iE)$. This space may be canonically identified  as a $GL(E)$-module  
with $\Lambda^0E^*\ot \Lambda^mE\simeq \Lambda^mE$. 
(The identification $\Lambda^0E\simeq\Lambda^mE$ allows one  to identify
this space with $\CC$.) Using the maps  \eqref{uwk}, 
we get $\GL(E)$-equivariant maps

\begin{center}
\begin{tikzpicture}
  \matrix (m) [matrix of math nodes,row sep=3em,column sep=4em,minimum width=2em]
  {E\otimes F^*&\Hom(\oplus_{j=0}^{m-1}\Lambda^jE,\oplus_{i=1}^{m}\Lambda^iE)&\Lambda^mE.\\};
  \path[-stealth]  
    (m-1-1) edge node [above] {$\tilde a$} (m-1-2)
    (m-1-2) edge node [above] {$\tDet$} (m-1-3);       
\end{tikzpicture} 
\end{center}
Hence for all $u\in E\otimes F^*$ and all $g\in\GL(E)$,  
\begin{eqnarray}
  \label{eq:3}
  \begin{array}{l@{\,}l}
\  \tDet(\tilde a(g^{-1}u))&=(g\cdot\tDet)(\tilde a(u))\\
&=\det(g)\inv\tDet(\tilde a(u)).
  \end{array}
\end{eqnarray}

Equation~\eqref{eq:3} shows that $\GL(E)$
is contained  in the image of $\bar \rho_A$.\\

 Equation~\eqref{eq:3} also proves that $P$ 
is a scalar (possibly zero) multiple of the determinant.
Consider $P(\Id_m)=\det_n(\tilde A(\Id_m))$. Perform a Laplace expansion
of this large determinant:  there is only one non-zero expansion term, so $P$ is
the determinant up to a sign.

To see the sign is as asserted in Proposition~\ref{dethalfrep}, specialize to the diagonal matrices, then there is just one term. Note that
$y^i_i$ appears in the large matrix with the sign $(-1)^{i+1}$. Thus the total contribution
of these signs is $(-1)$ if $m\equiv 2,3\tmod 4$ and $(+1)$ if $m\equiv 0,1\tmod 4$.

The $k$-th  block of $\Lambda$ contributes a sign of  $(-1)^{\binom mk-1}$ if we perform a left to
right Laplace expansion, except for the last which contributes $(1)^{m-1}$.
(The terms are   negative because the $Id_{\binom mk -1}$ will begin
in the left-most column each time, but, except for the last block, 
it begins in the second row.)
Thus the total contribution of $\Lambda$ to the sign is $(-1)^{\sum_{k=1}^{m-2}[\binom mk -1]}=1$.

 The slot of  each $\pm y^i_i$ except $y^m_m$ (whose slot   always contributes positively in the Laplace expansion)  
contributes a $(-1)^{\binom m{i-1} }$ (it is always left-most, but appears $\binom m{i-1}+1$  slots below
the top in the remaining matrix). Thus the total contribution from these slots is $(-1)^{\sum_{i=0}^{m-2} \binom mi}=
(-1)^{2^m-m-1}=(-1)^{m+1}$. 

Thus the total sign is $-1$ if $m\equiv 0, 3\tmod 4$ and $+1$ if $m\equiv 1,2\tmod 4$. 
\end{proof}

\begin{proof}[Proof of Theorem~\ref{th:srdcdet}] 
Let $\tilde A=\Lambda + A: \Mcc_m(\BC)\ra \Mcc_n(\BC)$  be a regular determinantal representation of $\tdet_m$
that respects $\GL(E)$.  It remains
to prove that $n\geq 2^m-1$.

We may assume $\Lambda =\Lambda_{n-1}$.
As in the proof of Proposition~\ref{quadprop}, after possibly  conjugating
$\tilde A$, we construct a connected reductive subgroup $L$ of
$\GL(\ell_2)\times\GL(\BH)$ mapping onto $\GL(E)$ by $\bar\rho_A$. 


We have an action of $L$ on $\cM_n(\BC)$, but  we would like to work
with $\GL(E)$. Towards this end, there exists a finite cover $\tau :
\tilde L\longto L$ 
that is isomorphic to the product of a torus and a product of simple
simply connected groups.  
In particular there exists a subgroup of $\tilde L$ isomorphic to
$\CC^*\times \SL(E)$
such that $\bar\rho_A\circ\tau(\CC^*\times \SL(E))=\GL(E)$.
The group $\CC^*\times \SL(E)$ acts trivially on
$\ell_1$,  on
 $\ell_2$ (by some character) and  on $\BH$. It acts on $\Mcc_n(\BC)=(\ell_1^*\oplus
\BH)\ot(\ell_2\oplus \BH) $ accordingly.

\medskip

The $\CC^*\times
\SL(E)$-module
  $A(V)$ is isomorphic to the sum of $m$ copies of $E$, and $E$ is an irreducible $\CC^*\times \SL(E)$-module.
In particular its equivariant projection on   $\ell_1^*\ot\ell_2$ is zero, 
which implies that the $(1,1)$ entry of the matrix of  $\tilde A$ (in adapted bases) is zero.  
Consider the  equivariant projection of $A(V)$  on    $\ell_1^*\ot \BH$.
This projection in bases goes to the remainder of the first column. It must be non-zero or
$\tdet_n\circ\tilde A$ will be identically zero.
Since it  is equivariant, $\ell_1^*\ot \BH\simeq \BH$ must contain $E$
as a  $\CC^*\times \SL(E)$-module.
Similarly, examining the first row,    $\BH^*\ot \ell_2$ has to contain $E$
as a  $\CC^*\times \SL(E)$-module.

If $m=2$, it is possible that $\BH\simeq E$ and $\BH\otimes \ell_2\simeq
E^*$.
In this case, $\tdet_2$ is a quadratic form, and we recover its
determinantal representation of size 3. 

Assume now that $m\geq 3$, in particular that $E$ and $E^*$ are not
isomorphic as   $\SL(E)$-modules. 
We just proved that $\BH$ must contain a subspace isomorphic to $E$,  say
$\BH_1$. Since $\BH_1^*\ot \ell_2$ is   an irreducible $\SL(E)$-module and not
isomorphic to $E$, the projection of $A(V)$ on this
factor is zero. 

Choose a $\BC^*\times\SL(E)$-stable complement $S_1$ to $\BH_1$ in $\BH$.
If the projection of $A(V)$ to  the block $\BH_1^*\ot S_1$ is zero, 
  by expanding the columns corresponding to $\BH_1^*$, one sees that
$\det_m$ is equal to the determinant in $(\ell_1\oplus
S_1)^*\ot(\ell_2\ot S_1)$, and we can restart the proof with $S_1$ in place of $\BH$. 

So assume  that  the projection of $A(V)$ onto  the block $\BH_1^*\ot S_1$ is non-zero.
 Then there  must be some irreducible $(\CC^*\times \SL(E))$-submodule $\BH_2$ such that
 $\BH_1^*\ot \BH_2$ contains $E$ as a submodule.
Continuing, we get a sequence of simple $(\CC^*\times
\SL(E))$-submodules  $\BH_1,\dots,\BH_k$ of $\BH$ such that $E$ is a submodule
of $\BH_i^*\ot\BH_{i+1}$ and of $\ell_2\ot\BH_k^*$.



 The situation is easy to visualize with Young diagrams.
 The  irreducible polynomial 
 representations of $\GL(E)$ are indexed by partitions, and irreducible representations
 of $\SL(E)$ by equivalence classes of partitions where $\pi\pm  (c^m)\sim \pi$, and to a partition $\pi=(p_1\hd p_{m-1})$
 we associate a Young diagram, a collection  of left-aligned boxes with
 $p_j$ boxes in the $j$-th row.
 For example, the Young diagram for $\pi=(2,1,1)=:(2,1^2)$ is
 $$
 \yng(2,1,1). 
 $$
 For example, as an $SL(E)$-module,  $E$ (resp. $E^*$) corresponds the class of to a single box (resp. a column of
 $m-1$ boxes). (As a $GL(E)$-module, $E^*$ corresponds to a diagram with $-1$ boxes.)
  The Pieri formula  implies that $E\subset S_{\pi}E^*\ot S_{\mu}E$ if and only if the diagram
of $\mu$ is obtained from the diagram of $\pi$ by adding a box. 
Then the sequence of Young diagrams associated to  the irreducible $\SL(E)$  modules $\BH_i$ start
with one box,   and increases by one
box at each step. Thus we must have $\BH_k$ associated to $\pi=(c^{m-1},c-1)$ for some $c$.
 To have the proper $\BC^*$-action, we choose the action on $\ell_2$ to cancel
 the $(c-1)\times m$ box. 
 We deduce that
$(\BH_1,\cdots,\BH_k)=(\Lambda^1E,\Lambda^2E,\dots,\Lambda^{m-1}E\simeq
E^*)$ is the unique minimal sequence of   modules. In particular the dimension of $\BH$ is
at least  $\sum_{k=1}^{m-1}\binom m k =2^m-2$.
\end{proof}

\begin{proof}[Proof of Proposition~\ref{detwithrespect}]
Write $\tilde A$ as
$$
\begin{array}{cccl}
  \tilde a\,:&E\ot F^*&\longto&\bigg(\oplus_{i=1}^m\Lambda^iE\ot\Lambda^iF^*\bigg)\ot \bigg(\oplus_{j=0}^{m-1}\Lambda^jE\ot\Lambda^jF^*\bigg)^*.
\end{array}
$$
For $u\in E\ot F^*$, $\tDet(\tilde a(u))$ belongs to 
$$
\bigg(\Lambda^n\oplus_{j=0}^{m-1}\Lambda^jE\ot\Lambda^jF^*\bigg)^*\ot \bigg(\Lambda^n\oplus_{i=1}^{m}\Lambda^iE\ot\Lambda^iF^*\bigg),
$$
which may be canonically identified  with
$$
(\Lambda^0E\ot\Lambda^0F^*)^*\ot\Lambda^mE\ot\Lambda^mF^*\simeq \Lambda^mE\ot\Lambda^mF^*
$$
These identifications determine  $\GL(E)\times\GL(F)$-equivariant polynomial maps
\begin{center}
\begin{tikzpicture}
  \matrix (m) [matrix of math nodes,row sep=3em,column sep=3em,minimum width=2em]
  {E\ot F^*&\bigg(\oplus_{i=1}^m\Lambda^iE\ot\Lambda^iF^*\bigg)\ot \bigg(\oplus_{j=0}^{m-1}\Lambda^jE\ot\Lambda^jF^*\bigg)^*
&
\Lambda^mE\ot\Lambda^mF^*.\\};
  \path[-stealth]  
    (m-1-1) edge node [above] {$\tilde a$} (m-1-2)
    (m-1-2) edge node [above] {$\tDet$} (m-1-3);       
\end{tikzpicture} 
\end{center}

Choose  bases $\Bc_E$ and $\Bc_F$, respectively of $E$ and $F$ to identify $E\ot F^*$  with
$\Mcc_m(\CC)$.  
Choose a total order on the subsets of $\Bc_E\times\Bc_F$, to get
bases of each $\Lambda^jE\ot\Lambda^jF^*$ and maps 

\begin{center}
\begin{tikzpicture}
  \matrix (m) [matrix of math nodes,row sep=3em,column sep=4em,minimum width=2em]
  {\Mcc_m(\CC)&\Mcc_n(\CC)&\CC.\\};
  \path[-stealth]  
    (m-1-1) edge node [above] {$\tilde A$} (m-1-2)
    (m-1-2) edge node [above] {$\tdet_n$} (m-1-3);       
\end{tikzpicture} 
\end{center}

As in the proof of Proposition~\ref{dethalfrep}, this implies that
$\GL(E)\times\GL(F)$ belongs to the image of $\bar\rho_A$, so
  $P=\tdet_n\circ\tilde A$ is a scalar multiple of $\tdet_m$.\\

To see it is the correct multiple, specialize to the diagonal matrices. Re-order the
rows and columns so that all non-zero entries of $  A(\cM_m(\BC_m))$ appear in the upper-left
corner. Note that since we made the same permutation to rows and columns this does not change
the sign. Also note that since we have diagonal matrices, there are only plus signs for the entries
of $  A(\cM_m(\BC_m))$. In fact this upper-left corner is exactly Grenet's representation for
$$\tperm_m
\begin{pmatrix} y^1_1 & \cdots & y^m_m\\
& \vdots &\\ 
 y^1_1 & \cdots & y^m_m
\end{pmatrix}
$$
which is $m!(y^1_1\cdots y^m_m)$. Finally note that each term in an expansion contains $n-m$ elements
of $\Lambda_0$ to conclude.\\

It remains to prove that $\transp$ belongs to the image of
$\bar\rho_A$. The following diagram is commutative:
\begin{center}
\begin{tikzpicture}
  \matrix (m) [matrix of math nodes,row sep=3em,column sep=3em,minimum width=2em]
  {E\ot F^*&
\bigg(\oplus_{i=1}^m\Lambda^iE\ot\Lambda^iF^*\bigg)\ot \bigg(\oplus_{j=0}^{m-1}\Lambda^jE\ot\Lambda^jF^*\bigg)^*
&
\Lambda^mE\ot\Lambda^mF^*.\\
F^*\ot E&
\bigg(\oplus_{i=1}^m\Lambda^iF^*\ot\Lambda^iE\bigg)\ot
\bigg(\oplus_{j=0}^{m-1}\Lambda^jF^*\ot\Lambda^jE\bigg)^*
&
\Lambda^mF^*\ot\Lambda^mE,\\
};
  \path[-stealth]  
    (m-1-1) edge node [above] {$\tilde a$} (m-1-2)
    (m-1-2) edge node [above] {$\tDet$} (m-1-3)
    (m-2-1) edge  (m-2-2)
    (m-2-2) edge node [above] {$\tDet$} (m-2-3)
    (m-1-1) edge node [right] {$\transp$} (m-2-1)
    (m-1-2) edge node [right] {$\Delta\transp$} (m-2-2)
    (m-1-3) edge node [right] {$\transp$} (m-2-3);   
\end{tikzpicture} 
\end{center}
where $\Delta\transp$ is the transposition on each summand. 

Using $\Bc_E$ and $\Bc_F$, we identify the 6 spaces with  matrix
spaces. The first vertical map becomes the transposition from
$\Mcc_m(\CC)$ to itself. 
The last vertical map becomes the identity on $\CC$.
The middle vertical map  is the endomorphism of $\Mcc_n(\CC)$
corresponding to bijections between bases of spaces
$\Lambda^jE\ot\Lambda^jF^*$ and  $\Lambda^jF^*\ot\Lambda^jE$. 
It follows that there exist two permutation matrices
$B_1,B_2\in\GL_n(\CC)$ such that
$$
\forall M\in\Mcc_m(\CC)\qquad 
\tilde A(M^T)=B_1\tilde A(M)B_2\inv, 
$$
proving that $\transp$ belongs to the image of $\bar\rho_A$.
\end{proof}

\section{Proofs of results on determinantal representations of $\tperm_m$} 
\label{sec:perm}

\begin{proof}[Proofs of Propositions~\ref{grenetrep} and~\ref{permwithrespect}]
The maps $s_k(v): (S^kE)_{reg} \ra (S^{k+1}E)_{reg}$
are related to the maps 
  $ex_k(v): \La k E\ra \La{k+1}E$ 
 as follows. The sources of both maps have bases indexed by multi-indices $I=(i_1\hd i_k)$ with
 $1\leq i_1<\cdots < i_k\leq m$, and similarly for the targets.
The maps are the same on these basis vectors except for with $s_k(v)$ all the coefficients are
positive whereas with $ex_k(v)$ there are signs. Thus the polynomial computed by
\eqref{Grenetexpr} is the same as the polynomial computed by \eqref{regdet} except
all the $y^i_j$ appear positively. Reviewing the sign calculation, we get the result.

The maps $S_k$ and $EX_k$ are similarly related and we conclude this case similarly.
\end{proof}  
  
\begin{remark} The above proof can be viewed more invariantly in terms of
the Young-Howe duality functor described in \cite{MR2308168}.
\end{remark}

\begin{proof}[Proof of Theorem~\ref {halfsdcperm}]
Write  $E,F=\CC^m$.
Let $\tilde A$ be a determinantal representation of $\tperm_m$ such
that $\tilde A(0)=\Lambda_{n-1}$.
Embed $N(T^{\GL(E)})$ in $\GL(\Hom(F,E))$ by $g\longmapsto
\{M\mapsto gM\}$.
We assume  the image of $\bar\rho_A$ contains $N(T^{\GL(E)})$. 
Set $N=N(T^{\GL(E)})$ and $T=T^{\GL(E)}$.

As in the proof of Theorem~\ref{halfsdcdet}, we get a reductive
subgroup $L$ of $(\GL(\ell_2)\times \GL(\BH))\rtimes\ZZ_2$ mapping
onto $N$ by $\bar\rho_A$.
In the determinant case, at this point we dealt with
the universal cover of the connected reductive group $\GL(E)$.
Here the situation is more complicated for two reasons.
First,  there is no \lq\lq finite universal cover\rq\rq\  of    $\Symgr_m$ (see e.g. \cite{MR991411,MR1007885}).
Second, since our group is not connected, we will have to deal with  the factor $\BZ_2$ coming from
transposition, which will force us to work with a subgroup of
$N$. Fortunately this will be enough for our purposes.


We first deal with the $\BZ_2$:
Since $L/(L\cap \BG_{\tdet_n,\Lambda_{n-1}}^\circ)$ embeds in
$\BG_{\tdet_n,\Lambda_{n-1}}/\BG_{\tdet_n,\Lambda_{n-1}}^\circ\simeq\BZ_2$, the subgroup 
$L\cap \BG_{\tdet_n,\Lambda_{n-1}}^\circ$ has index 1 or 2 in $L$.  
Since the alternating group $\Altgr_m$ is the only index 2 subgroup of $\Symgr_m$, 
$\bar\rho_A(L\cap \BG_{\tdet_n,\Lambda_{n-1}}^\circ)$ contains $T\rtimes\Altgr_m\subset N $.
In any case, there exists a reductive subgroup $L'$ of $L$   such that
$\bar\rho_A(L')=T\rtimes\Altgr_m\subset N$.\\

Next we deal with the lack of a lift. We will get around this by showing we may 
  label irreducible $L'$ modules only using
labels from 
$\bar\rho_A(L')=T^{\GL(E)}\rtimes\Altgr_m$. 

The connected reductive group $L'^\circ$ maps onto $T$.
Let  $Z$ denote the  center of $L'^\circ$; then $\bar\rho_A(Z^\circ)=T$. 
In particular the character group $X(T)$ may be identified  with a subgroup of the character group  $X(Z^\circ)$.
The action of $L'$ by conjugation   on itself induces an action of the finite group $L'/L'^\circ$ on $Z^\circ$. 
Moreover, the morphism $\bar\rho_A$ induces a surjective map  $\pi_A\,:\,L'/L'^\circ\longto \Altgr_m$. 
These actions are compatible  in the sense that
for  $g\in L'/L'^\circ$, $t\in T$, $z\in Z^\circ$ and $\sigma\in
\Altgr_m$ satisfying  $\pi_A(g)=\sigma$ and $\bar\rho_A(z)=t$, 
$$
\sigma\cdot t=\bar\rho_A(g\cdot z).
$$
In particular, both  the kernel of ${\bar\rho_A}$ restricted to ${Z^\circ}$ and $X(T)$ are stable under  the $(L'/L'^\circ)$-action.

Let $\Gamma_\QQ$ be a complement of the subspace $X(T)\ot \QQ$ in 
the vector space $X(Z^\circ)\ot \QQ$ stable under  the action of
$(L'/L'^\circ)$. 
Set $\Gamma=\Gamma_\QQ\cap X(Z^\circ)$ and 
$\tilde T=\{t\in Z^\circ\,:\,\forall \chi\in
\Gamma\qquad\chi(t)=1\}$. 
Then $\tilde T$ is a subtorus of $Z^\circ$ and the restriction of $\bar\rho_A$ to $\tilde T$ is a finite morphism onto $T$.

The character group $X(\tilde T)$ may be identified  with $X(Z^\circ)/\Gamma$ by restriction.
 Then $X(T)$ may be identified with a subgroup of $X(\tilde T)$ of finite index.
 Hence there exists a natural number $k_0$ such that 
$k_0X(\tilde T)\subset X(T)\subset X(\tilde T)$. 

Let   $W$ be an irreducible representation of $L'$. It decomposes under the action of $\tilde T$ as  
$$
W=\oplus_{\chi\in X(\tilde T)}W^\chi.
$$
Set $\Wt(\tilde T,W)=\{\chi\in X(\tilde T)\,:\,W^\chi\neq\{0\}\}$. 
The group $L'$ acts by conjugation on $\tilde T$ and so on $X(\tilde
T)$. 
By the rigidity of tori (see e.g.,  \cite[\S16.3]{MR0396773}), $L'^\circ$
acts trivially. 
Hence, the finite group
 $L'/L'^\circ$ acts on $\tilde T$ and $X(\tilde T)$.
For any $t\in \tilde T$, $h\in L'$ and $v\in W$,
$thv=h(h^{-1}th)v$. 
Hence  $\Wt(\tilde T,W)$ is stable under  the action of $L'/L'^\circ$.  
For $\chi\in\Wt(\tilde T,W)$, the set  $\oplus_{\sigma\in L'/L'^\circ}W^{\sigma\cdot\chi}$ is stable under  the action $L'$. 
By irreducibility of $W$, one deduces that $\Wt(\tilde T,W)$ is a single  $(L'/L'^\circ)$-orbit.
Then $k_0\Wt(\tilde T,W)\subset X(T)$ is an  $\Altgr_m$-orbit. 


We are now in a position to argue as in \S\ref{outlinesubsect}.

\medskip

Let $\varepsilon_i$ denote the character of $T$ that maps an element
of $T$ on its i$^{th}$ diagonal entry.
The set $\{a_1\varepsilon_1+\dots+a_m\varepsilon_m\,:\,
a_1\geq\cdots\geq a_{m-1}\quad{\rm and}\quad a_{m-2}\geq a_m\}$ is a
fundamental domain of the action of $\Altgr_m$ on $X(T^{\GL(E)})$.
Such a weight is said to be $\Altgr_m$-dominant. 
Hence, there exists a unique  $\Altgr_m$-dominant weight $\chi_W$ such
that $k_0\Wt(\tilde T,W)=\Altgr_m.\chi_W$. 

\medskip

Summary of the properties of $L'$, $\tilde T$ and $k_0$:
\begin{enumerate}
\item $L'$ is a reductive subgroup of $\GL(\ell_2)\times\GL(\BH)$;
\item $\bar\rho_A(L')=\Altgr_m\ltimes T$ ;
\item $\tilde T$ is a central subtorus of $(L')^\circ$; 
\item $\bar\rho_A\,:\,\tilde T\longto T$ is finite and surjective, inducing embeddings $k_0X(\tilde
  T)\subset X(T)\subset X(\tilde T)$;
\item For any irreducible representation $W$ of $L'$ there exists a
  unique $\Altgr_m$-dominant weight $\chi_W$ such
that $k_0\Wt(\tilde T,W)=\Altgr_m.\chi_W$.
\item For the standard representation $E$ of $L'$ through  $\bar\rho_A$, $\chi_E=k_0\varepsilon_1$.
\end{enumerate}

\bigskip
The action of $L'$ on $\Mcc_n(\BC)=(\ell_1\op \BH)^*\ot (\ell_2\op
\BH)$ 
respects the decomposition
$$
\Mcc_n(\BC)=\ell_1^*\ot\ell_2\oplus \ell_1^*\ot \BH\oplus
\BH^*\ot\ell_2\oplus \BH^*\ot \BH.
$$

The image of $A$ is an $L'$-module isomorphic to the sum of $m$ copies
of $E$.
In particular, its projection on $\ell_1^*\ot\ell_2$ has to be
zero. Hence
$$
A(V)\subset \ell_1^*\ot \BH\oplus
\BH^*\ot\ell_2\oplus \BH^*\ot \BH.
$$
As was the case before,  for the determinant
to be non-zero, we need the projection to $\ell_1^*\ot \BH$ to be non-zero, so
it must contain at least one copy $\BH_1$ of $E$.

Assume first that $\BH_1^*\ot\ell_2\simeq E$. This happens only if
$m=2$, where $\perm_m$ is a quadric.

Assume now that  $\BH_1^*\ot\ell_2\not\simeq E$.
Choose an $L'$-stable complement $\BS_1$ of $\BH_1$ in $\BH$.
If the projection of $A(V)$ on $\BH_1^*\ot\BS_1$ is zero, one can
discard $\BH_1$ and start over as in the proof of the determinant case,
so we assume it contributes non-trivially. 
Continuing so on, one gets a sequence $\BH_1,\dots,\BH_k$ 
of irreducible $L'$-submodules
of $\BH$ in direct sum such that
\begin{enumerate}
\item $k\geq 2$;
\item $E\subset \BH_i^*\ot\BH_{i+1}$ for any $i=1,\dots,k-1$;
\item $E\simeq \BH_k^*\ot\ell_2$.
\end{enumerate}

Let $\gamma\,:\,\CC^*\longto \tilde T$ be a group homomorphism such that $\bar\rho_A\circ\gamma(t)=t^{k_0}\Id_E$.
Then $\g(\CC^*)$ acts trivially   on $\BH_1^*\ot \BH_1$ and
with weight $k_0$ on $E$. Hence the projection of 
$A(V)$ on $\BH_1^*\ot \BH_1$ is zero.
(Recall that
  via $\Lambda$, $\Id_{\BH_1}\in \BH_1^*\ot \BH_1$.) 
More generally the action of $\gamma$ shows  that the non-zero
blocks of $A(V)$ are   $\ell_1^*\ot \BH_1$,
$\BH_i^*\ot\BH_{i+1}$, and $\BH_k^*\ot\ell_2$.
Consider the following
picture:
$$
\begin{pmatrix}
0&\BH_1^*\ot \ell_2 & ...\\
E& \Id_{\BH_1}&...\\
\vdots & \BH_1^*\otimes S_1 & ...\end{pmatrix}.
$$

Write
$$
k_0\Wt(\tilde T,\BH_i)=\Altgr_m\cdot  \chi_{\BH_i}.
$$
Then
$$
k_0\Wt(\tilde T,\BH_i^*\ot
\BH_{i+1})=\{-\sigma_1\chi_{\BH_i}+\sigma_2\chi_{\BH_{i+1}}\,:\,\sigma_1,\sigma_2\in\Altgr_m\}.
$$
This set has to contain $k_0\Wt(\tilde
T,E)=\{k_0\varepsilon_i\,\mid \,i\in [m]\}$.
We deduce that $\chi_{\BH_{i+1}}=\sigma\chi_{\BH_i} +k_0 \varepsilon_u$ for some
$\sigma\in\Altgr_m$ and $u\in[m]$.

We define the length $\ell(\chi)$ of $\chi\in X(T)$ as its number of
non-zero coordinates in the basis $(\varepsilon_1,\dots,\varepsilon_m)$.
Then 
\begin{eqnarray}
  \label{ineq:lengthchi}
  \ell(\chi_{\BH_{i+1}})\leq \ell(\chi_{\BH_{i}})+1.
\end{eqnarray}




Observe that $\chi_{\ell_2}$ is invariant under $\Altgr_m$. 
Hence $\chi_{\ell_2}=\a(\varepsilon_1+\cdots +\varepsilon_m)$
for some $\a\in \BZ$. 
The action of $\gamma$ shows that $\a=k-1$.
We deduce that $\ell(\chi_{\BH_k})\geq m-1$.
Then, by inequality~\eqref{ineq:lengthchi}, there exists a subset $\BH_{i_1},\dots, \BH_{i_{m-1}}$ of the
$\BH_j$'s with $\ell(\chi_{\BH_{i_s}})=s$.

We claim that $\dim(\BH_{i_s})\geq \binom m s$.
First, $\dim(\BH_{i_s})$ is greater or equal to the cardinality of
$\Altgr_m \chi_{\BH_{i_s}}$. Since $m\geq 3$, $\Altgr_m$ acts
transitively on the subsets of $[ m ]$ with $s$ elements.
The claim follows.

Summing these inequalities on the dimension of the $\BH_{i_s}$, we get
$$
\dim \BH\geq\sum_{j=0}^{m-1}\dim \BH_{i_j}\geq\sum_{j=1}^{m-1}\binom
mj=2^m-2.
$$
\end{proof}

\begin{proof}[Proof of Theorem~\ref{mainthm}]
 This proof is omitted since it is very similar to the proof  of Theorem~\ref{th:srdcdet}.
\end{proof}

\bibliographystyle{amsalpha}

\bibliography{LRpermdetNR}

\end{document}